\newtheorem{theorem}{Theorem}
\newtheorem{proposition}[theorem]{Proposition}
\newtheorem{lemma}[theorem]{Lemma}
\newtheorem{corollary}[theorem]{Corollary}
\theoremstyle{definition}
\newtheorem{definition}[theorem]{Definition}
\newtheorem{example}[theorem]{Example}
\newtheorem{remark}[theorem]{Remark}
\newtheorem{question}[theorem]{Question}
\newtheorem{problem}[theorem]{Problem}
\newcommand{\defin}[1]{\textcolor{blue}{\emph{#1}}}
\newcommand{\mdefin}[1]{\textcolor{blue}{#1}}
\newcommand{\qbinom}{\genfrac{[}{]}{0pt}{}}
\newcommand{\setN}{\mathbb{N}}
\newcommand{\setZ}{\mathbb{Z}}
\newcommand{\schurS}{\mathrm{s}}
\newcommand{\modK}{{\tilde{K}}}
\newcommand{\avec}{\mathbf{a}}
\newcommand{\bvec}{\mathbf{b}}
\newcommand{\+}{{\scriptstyle{+}}}
\newcommand{\minus}{ {\scriptstyle{-}}}
\newcommand{\thsup}{\textnormal{th}}
\newcommand{\rskArrow}{\;\xrightarrow{\scriptstyle{\mathrm{RSK}}}\;}
\newcommand{\symS}{S}
\newcommand{\SYT}{\mathrm{SYT}}
\newcommand{\DES}{\mathrm{Des}}
\newcommand{\matrixSet}{\mathcal{M}}
\newcommand{\SSYT}{\mathrm{SSYT}}
\newcommand{\SHST}{\mathrm{SHST}}
\newcommand{\GT}{\mathrm{GT}}
\newcommand{\PPset}{\mathcal{PP}}
\newcommand{\SM}{\mathrm{SM}}
\DeclareMathOperator{\partitionN}{\kappa}
\DeclareMathOperator{\rw}{rw}
\DeclareMathOperator{\rev}{rev}
\DeclareMathOperator{\maj}{maj}
\DeclareMathOperator{\sign}{sgn}
\DeclareMathOperator{\inv}{inv}
\DeclareMathOperator{\rot}{rot}	%One-step rotation to the right
\DeclareMathOperator{\imaj}{imaj}
\DeclareMathOperator{\charge}{charge}
\DeclareMathOperator{\depth}{\delta}
\DeclareMathOperator{\cocharge}{cc}
\DeclareMathOperator{\length}{\ell}
\DeclareMathOperator{\PPbij}{\rho}
\DeclareMathOperator{\toggle}{\tau}
\definecolor{c1}{RGB}{127, 250, 185}
\definecolor{c2}{RGB}{233, 136, 247}
\definecolor{c3}{RGB}{247, 151, 99}
\definecolor{c4}{RGB}{86, 172, 252}
\definecolor{c5}{RGB}{169, 119, 230}
\definecolor{c6}{RGB}{85, 189, 79}
\title{Promotion and cyclic sieving on families of SSYT}
\author{Per Alexandersson}
\email{per.w.alexandersson@gmail.com}
\address{Department of Mathematics,
Stockholm University,
S-10691, Stockholm, Sweden}
\author{Ezgi Kantarci Oğuz}
\email{ezgikantarcioguz@gmail.com}
\address{Dept. of Mathematics, 
Royal Institute of Technology, 
SE-100 44 Stockholm, Sweden}
\author{Svante Linusson}
\email{linusson@math.kth.se}
\address{Dept. of Mathematics, 
Royal Institute of Technology, 
SE-100 44 Stockholm, Sweden}
\date{\today}
\begin{document}

\begin{abstract}
	We examine a few families of semistandard Young tableaux, for which 
	we observe the cyclic sieving phenomenon under promotion.
	
	The first family we consider consists of stretched hook shapes,
	where we use the cocharge generating polynomial as CSP-polynomial.
	
	The second family we consider consists of skew shapes, consisting of rectangles.
	Again, the charge generating polynomial together
	with promotion exhibits the cyclic sieving phenomenon.
	This generalizes earlier result by B.~Rhoades and later
	B.~Fontaine and J.~Kamnitzer.
	
	Finally, we consider certain skew ribbons,
	where promotion behaves in a predictable manner.
	This result is stated in form of a bicyclic sieving phenomenon.
	
	One of the tools we use is a novel method for computing charge of 
	skew semistandard tableaux, in the case when every number in the 
	tableau occur with the same frequency.
\end{abstract}

\maketitle

\setcounter{tocdepth}{2}
\tableofcontents

\section{Introduction}

The cyclic sieving phenomenon has been studied extensively since
its introduction in 2004 \cite{ReinerStantonWhite2004}.
Briefly, this phenomenon relates a cyclic group action on a set of combinatorial objects 
with the values at roots of unity, on some $q$-analog of the cardinality of the set.

\subsection{Cyclic sieving}

The notion of cyclic sieving is defined as follows.
\begin{definition}[Cyclic sieving, Reiner--Stanton--White \cite{ReinerStantonWhite2004}]
Let $X$ be a set of combinatorial objects and $C_n$ 
be a cyclic group of order $n$ acting on $X$. 
We say that the \defin{triple} $\mdefin{(X,C_n,f(q))}$ \defin{exhibits the cyclic sieving phenomenon, (CSP)}
if for all $d \in \setZ$,
\begin{align}\label{eq:cspDef}
|\{ x\in X : g^d \cdot x = x \}| = f(\xi^d)
\end{align}
where $\xi$ is a primitive $n^\thsup$ root of 
unity and $f(q)\in \setN[q]$.

\end{definition}
A few selected relevant CSP-triples are listed in \cref{tab:cspTable},
together with the new results in this article.

One particular instance of CSP is the set of semistandard Young tableaux of a fixed rectangular shape,
and a $q$-analog given by the $q$-hook-content formula. 
The cyclic group action is the promotion operator (denoted $\partial$, defined in \cref{sec:prelTab}),
see  B.~Rhoades~\cite{Rhoades2010}.
Rhoades' result was further refined in \cite{FontaineKamnitzer2013} where cyclic sieving on 
rectangular SSYT with a fixed content vector was considered.

In a recent preprint, \cite{AlexanderssonPfannererRubeyUhlin2020x},
it is proved that for a fixed skew shape $\lambda/\mu$, 
there exist \defin{some} cyclic group action $C_n$ of order $n$, such that 
\begin{equation}\label{eq:APRU}
 \left( \SYT(n\lambda/n\mu), C_n, f^{n\lambda/n\mu}(q) \right)
\end{equation}
is a CSP-triple, where $f^{n\lambda/n\mu}(q)$ is a skew Kostka--Foulkes polynomial.
Note that when $\mu=\emptyset$, $f^{n\lambda}(q)$ can be computed by the $q$-hook formula.
The notion of multiplying every part in a partition with a factor is commonly referred to
as \defin{stretching}, and has a certain geometric interpretation.
It is therefore natural to explore situations similar to \eqref{eq:APRU}.

\subsection{Main results}

There has been less research on non-rectangular shapes---promotion
usually have a very high order, which makes it unlikely to 
find nice instances of CSP.

One case which do behave nicely is the case of hook shapes.
Hook semistandard tableaux have been considered in
\cite{BennettMadillStokke2014,PresseyStokkeVisentin2016,OhPark2019}.
In this paper, we consider certain semistandard Young tableaux where 
the shape is a \defin{stretched hook}, i.e.,
of the form $\lambda = ((n+1)a, n^b)$ for some non-negative $a,b \geq 1$.

The first main theorem is as follows.
\begin{theorem}[\cref{cor:cspOnStretchedHooks} below]
 Let $\lambda = ((n+1)a, n^b)$, and let $\nu=n^{a+b+1}$. 
 Then 
  \[
  \left( \SSYT(\lambda, \nu), \langle \partial \rangle, q^{-n \binom{b}{2}} \modK_{\lambda,\nu}(q) \right)
 \]
exhibits the cyclic sieving phenomenon, where
\[
 \modK_{\lambda,\nu}(q) = \sum_{T \in  \SSYT(\lambda, \nu)} q^{\cocharge(T)}
 = q^{n \binom{b}{2}} \prod_{i=1}^a \prod_{j=1}^b \frac{[i+j+n-1]_q}{[i+j-1]_q}.
\]
\end{theorem}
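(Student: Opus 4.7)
The plan is to establish a bijection between $\SSYT(\lambda,\nu)$ and plane partitions in an $a\times b\times n$ box, then to show that promotion on the tableau side corresponds to a cyclic action on the plane-partition side whose CSP is already known (for instance via Rhoades' theorem after converting plane partitions to rectangular SSYT).

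I would begin by recognizing the product as the $q$-MacMahon formula. Telescoping an inner $q$-factor yields
\[
\prod_{i=1}^a\prod_{j=1}^b\frac{[i+j+n-1]_q}{[i+j-1]_q}
=\prod_{i=1}^a\prod_{j=1}^b\prod_{k=1}^n\frac{1-q^{i+j+k-1}}{1-q^{i+j+k-2}}
=\sum_{\pi\in\PPset(a,b,n)}q^{|\pi|},
\]
so the claimed identity reduces to $\sum_{T}q^{\cocharge(T)}=q^{n\binom{b}{2}}\sum_{\pi}q^{|\pi|}$. To prove this I would construct a bijection $\phi\colon\SSYT(\lambda,\nu)\to\PPset(a,b,n)$ with $\cocharge(T)=|\phi(T)|+n\binom{b}{2}$. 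Because $\nu=n^{a+b+1}$ assigns every value the same multiplicity, the novel cocharge formula announced in the abstract should apply, rewriting $\cocharge(T)$ as a sum of local, column-wise contributions. Matching these to natural plane-partition statistics---for example by encoding $T$ via the positions of each value in the $b\times n$ subrectangle together with an ``excess'' vector from the first row---should produce both $\phi$ and the cocharge identity.

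To transfer the CSP, I would identify the image of promotion $\partial$ under $\phi$, show it has order dividing $a+b+1$, and verify that its fixed-point counts at powers of a primitive root of unity match $\sum_\pi q^{|\pi|}$ evaluated at those roots. The most direct route passes through the classical bijection between plane partitions in an $a\times b\times n$ box and rectangular SSYT $\SSYT(n^b,\{1,\dots,a+b\})$, on which Rhoades' theorem~\cite{Rhoades2010} (or the Fontaine--Kamnitzer refinement~\cite{FontaineKamnitzer2013}) supplies cyclic sieving with the $q$-hook-content polynomial. The prefactor $q^{-n\binom{b}{2}}$ on the CSP polynomial simply converts cocharge into the size statistic $|\pi|$.

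The principal difficulty is verifying that $\partial$ on $\SSYT(\lambda,\nu)$ corresponds, under $\phi$, to the rectangular-promotion action on the associated SSYT. Promotion involves global jeu-de-taquin slides that interact nontrivially with the elongated first row of the stretched hook, and there is no a priori reason these should commute with $\phi$. Establishing the intertwining will likely require a shape-specific argument that uses the balanced content $\nu=n^{a+b+1}$ in an essential way, with the novel cocharge formula serving as the chief bookkeeping tool: because it pinpoints which local shifts in $T$ account for $|\pi|$, it should also reveal how promotion permutes the corresponding plane partitions.
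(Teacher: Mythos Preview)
Your overall architecture---bijection to $\PPset(a,b,n)$ sending cocharge to $|\pi|$ plus a constant, then importing a known CSP---is exactly what the paper does. But there is a concrete gap that makes your proposal, as written, fail.

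You assert that the action of $\partial$ should have order dividing $a+b+1$. This is wrong: the order divides $a+b$, and in general does \emph{not} divide $a+b+1$. The reason is the key structural observation you are missing, which also dissolves what you flag as the ``principal difficulty''. Because the content is $n^{a+b+1}$ and the first row has length $n(a+1)$, the $n$ copies of $1$ are always the leftmost $n$ boxes of the first row. When you delete them under promotion, the $2$'s slide left into those positions (nothing below the first $n$ columns of row~1 except row~2, whose entries are $\geq 2$), so promotion on the stretched hook is completely determined by promotion on the $b\times n$ rectangle obtained by deleting row~1 and decrementing. That rectangle carries entries from $\{1,\dots,a+b\}$, whence $\partial^{a+b}=\mathrm{id}$. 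There is no nontrivial interaction between the elongated first row and the jeu-de-taquin slides; your worry about a ``shape-specific argument'' is resolved by this single observation.

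With this in hand, either of two known CSP results finishes the proof: Rhoades on $\SSYT(n^b,a+b)$ with the hook-content polynomial (your route), or the Shen--Weng toggle CSP on $\PPset(a,b,n)$ together with the Hopkins identification of toggles with $\partial^{-1}$ (the paper's route). Both give the same polynomial $M_q(a,b,n)$, and the cocharge formula then contributes exactly the factor $q^{n\binom{b}{2}}$ (the paper establishes the cocharge-to-$|\pi|$ shift via the depth-sequence computation rather than by ``local column-wise contributions'' as you sketch). So fix the order to $a+b$, note the first-row inertness, and your plan goes through.
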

Here $\modK_{\lambda,\nu}(q)$ is a \emph{modified Kostka--Foulkes polynomial},
given as a \defin{cocharge}-generating function.
In fact, by \cref{lem:KFrootOfUnityValues}, both the traditional 
and modified Kostka--Foulkes polynomial can be used.

We prove this result by using an equivariant map from 
stretched hook semistandard Young tableaux to a set of rectangular plane partitions,
where promotion correspond to a product of certain toggle operations.
We then use the fact that plane partitions together 
with such toggles exhibit cyclic sieving, see \cite{ShenWeng2018} (the case of order ideals of $2 \times [n]$ 
posets was considered earlier in \cite{RushShi2012}).
For some related open problems on plane partitions and cyclic sieving,
we refer to \cite{AbuzzahabKorsonChunMeyer2005,Hopkins2020}.

\begin{table}[!ht]
\centering
\begin{tabular}{p{4.2cm} l l l}
	\toprule
	\textbf{Set} & \textbf{Group} & \textbf{Polynomial} & \textbf{Ref.} \\ 
	\midrule 
	Binary words, $\mathrm{BW}(2n,n)$ &  $\rot$ & $\qbinom{2n}{n}_q$  &\cite{ReinerStantonWhite2004} \\
	Words, $W(kn,n^k)$ & $\rot$ & $\qbinom{kn}{n,n,\dotsc,n}_q$  &\cite{ReinerStantonWhite2004} \\
	\midrule 
	Rectangular, $\SSYT(a^b)$ &  $\partial$ & $q^{\ast}\schurS_{a^b}(1,\dotsc,q^{k-1})$  &\cite{Rhoades2010} \\
	Rectangular, $\SSYT(a^b,\gamma)$ &  $\partial^d$ &$q^\ast K_{a^b,\gamma}(q)$  &\cite{FontaineKamnitzer2013} \\
	Hooks $\SSYT((n-m,1^m),\gamma)$ &  $\partial^d$ &$\qbinom{nz(\gamma)-1}{m}_q$  &\cite{BennettMadillStokke2014} \\
	\midrule 
	Plane partitions & $\partial^\dagger$ 
	&$\prod_{\substack{1\leq i \leq a\\1 \leq j\leq b}} \frac{[i+j+n-1]_q}{[i+j-1]_q}$  &\cite{ShenWeng2018} \\
	\midrule 
	Matrices $M(n\nu,n^{m})$ & $\rot$ & 
	$\sum_{\lambda \vdash mn} K_{\lambda,n^m}(q) K_{\lambda,n\nu}(1)$ &\cite{Rhoades2010b} \\
	\midrule 
	\midrule 
	Stretched hooks & $\partial$ &  $\prod_{\substack{1\leq i \leq a\\1 \leq j\leq b}} \frac{[i+j+n-1]_q}{[i+j-1]_q}$ & \cref{cor:cspOnStretchedHooks} \\
	Disjoint rows   & $\partial$ &  $K_{n\lambda/n\mu,n^{|\lambda/\mu|}}(q)$ & \cref{thm:CSPonStretchedRibbons} \\
	Disjoint rectangles   & $\partial^d$ &  $K_{(a_1^{b_1}) \oplus \dotsb \oplus (a_r^{b_r}),\gamma}(q)$ & \cref{thm:rectanglesCSP} \\
	
	Certain two-row ribbon    & $\partial$ &  $\qbinom{n}{b}_t-[n]_t+[n-1]_q$ & \cref{cor:firstBiCSP} \\
	Certain three-row ribbon  & $\partial$ &  $[n-2]_t+(n-3)[n-1]_q$ & \cref{thm:secondBiCSP} \\
	\bottomrule
\end{tabular}
\caption{
	The group action is given by $k$-promotion, $\partial$, or an appropriate power of it,
	depending on the rotational symmetry of the content composition.
	The action $\partial^\dagger$ is defined via so-called ``toggles'', but can be mapped 
	in an equivariant manner to promotion on rectangular SSYT.
}\label{tab:cspTable}
\end{table}

We also show cyclic sieving under promotion on skew shapes,
consisting of a disjoint union of rows.
Let $\nu \vdash m$ and $n\geq 1$.
Let $\SM(\nu,n)$ be the set of skew semistandard Young tableaux where 
the shape $\lambda/\mu$ is a disjoint union of $\length(\nu)$
rows where row $j$ has length $n\nu_j$. 
Moreover, we ask that the content is given by $n^m$,
that is, there are $n$ boxes with label $j$, for each $j=1,\dotsc,m$.
\begin{theorem}[\cref{thm:CSPonStretchedRibbons} below]
We have that
\[
\left( \SM(\nu,n), \langle \partial \rangle, K_{\lambda/\mu,n^m}(q)  \right)
\]
is a CSP-triple, where $K_{\lambda/\mu,n^m}(q)$ is a Kostka--Foulkes polynomial.
\end{theorem}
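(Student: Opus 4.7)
The plan is to identify $\SM(\nu,n)$ with matrices so that promotion becomes column rotation, and then match the fixed-point count with $K_{\lambda/\mu,n^m}(\xi^d)$ using the novel charge formula from earlier in the paper.

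Since the $\length(\nu)$ rows of $\lambda/\mu$ lie in pairwise disjoint columns, a tableau $T\in\SM(\nu,n)$ is determined by the multiset of entries in each row, and I would encode $T$ as a non-negative integer matrix $A=(a_{j,i})\in\setN^{\ell\times m}$ with $a_{j,i}$ equal to the number of $i$'s in row $j$. This is a bijection between $\SM(\nu,n)$ and the set of matrices with row sums $n\nu$ and column sums $n^m$. Because no jeu de taquin slide can cross between disjoint rows, promotion acts on each row independently by the cyclic relabeling $(1,2,\dots,m)\mapsto(m,1,\dots,m-1)$, which on $A$ is exactly a left cyclic shift of columns. The $\partial^d$-fixed tableaux therefore correspond to matrices whose columns are $\gcd(d,m)$-periodic; writing $e=\gcd(d,m)$, these are parametrized by an $\ell\times e$ matrix with row sums $n\nu_j e/m$ (which must be integral for fixed points to exist) and column sums $n$, so the count is available in closed form.

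To match this count with $K_{\lambda/\mu,n^m}(\xi^d)$, I would invoke the novel method for computing charge on skew semistandard tableaux with equifrequent content developed earlier in the paper. Tailored to content $n^m$, it should express
\[
K_{\lambda/\mu,n^m}(q)=\sum_{T\in\SM(\nu,n)} q^{\charge(T)}
\]
as a $q$-generating function on the matrices $A$ whose statistic respects the column-rotation symmetry. Once charge is realized in this matrix-friendly form, the desired identity $\bigl|\SM(\nu,n)^{\partial^d}\bigr|=K_{\lambda/\mu,n^m}(\xi^d)$ reduces to a root-of-unity computation directly analogous to the Reiner--Stanton--White cyclic sieving for words of fixed content under rotation.

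The main obstacle is setting up this compatibility between charge and column rotation: a direct combinatorial description of $\charge(T)$ couples values of $T$ across rows in a way that obscures the rotational symmetry, and it is precisely the point of the novel formula to give a description that respects it. A secondary but unavoidable technicality is verifying that, in the paper's jeu de taquin conventions, promotion on disjoint-row skew shapes really reduces to the column shift on the matrix encoding; this should be routine, following from the fact that the only slides available inside such a shape are horizontal slides within a single row.
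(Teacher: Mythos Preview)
Your first paragraph matches the paper exactly: the matrix encoding $T\mapsto M(T)$ and the identification of $\partial$ with a left cyclic rotation of the columns are precisely what the paper uses.

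The gap is in your second paragraph. The paper does \emph{not} use the depth-sequence charge formula here at all. Instead it applies RSK to the biword of $M(T)$: the bottom row of that biword equals $\rw(T)$, charge is constant on Knuth classes, so $\charge(T)=\charge(P)$ for the insertion tableau $P$, and therefore
\[
K_{\lambda/\mu,n^m}(q)=\sum_{\kappa\vdash mn} K_{\kappa,n^m}(q)\,K_{\kappa,n\nu}.
\]
The right-hand side is exactly the polynomial appearing in a prior CSP of Rhoades for $\matrixSet(n\nu,n^m)$ under column rotation, so the theorem follows by transporting that CSP along the bijection you already set up. The root-of-unity evaluation is entirely outsourced to Rhoades' result; nothing is computed directly.

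Your proposed route via the depth-sequence formula is not completed, and the ``main obstacle'' you flag is genuine. Even granting that the cyclically extended sequence $(\depth_1,\dots,\depth_m)$ rotates under column rotation, the charge $\sum_j \depth_j(\rw(T))\,(m-j)$ carries the non-invariant weights $m-j$, so it does not behave like $\maj$ under rotation of a word. The analogy with the Reiner--Stanton--White argument breaks down because there the polynomial is a $q$-multinomial with a product formula, whereas here $K_{\lambda/\mu,n^m}(q)$ does not factor over the columns of the matrix, and the depth formula by itself gives no mechanism to evaluate it at $\xi^d$. Without RSK and Rhoades' theorem (or a substitute of comparable strength), the argument does not close.
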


In \cite{FontaineKamnitzer2013}, a cyclic sieving phenomenon involving 
$K_{a^b,\gamma}(q)$ is proved, where promotion act on the set $\SSYT(a^b,\gamma)$.
This result refines the earlier instance of CSP given in \cite{Rhoades2010}.
In \cref{thm:rectanglesCSP}, we give a generalization of the result by Fontaine--Kamnitzer to the case when promotion act on
skew shapes consisting of a disjoint union of rectangles.
As expected, Kostka--Foulkes polynomial indexed by a skew shape and a
composition is the main part of the CSP-polynomial.

Finally, in \cref{sec:biCSP}, we study promotion on two families of ribbon shapes,
where the order of promotion behaves nicely. We prove two instances of cyclic sieving.
In the last section, we give a few examples indicating that 
promotion on general ribbon shapes is most likely hard to analyze.

\subsection*{Acknowledgements}

The authors are grateful for the Institut Mittag--Leffler program
on \emph{Algebraic and Enumerative combinatorics}, spring 2020, funded by
\emph{Swedish Research Council (Vetenskapsrådet)}, grant 2016-06596.

The first author has been funded by \emph{Knut and Alice Wallenberg Foundation} (2013.03.07),
and by the \emph{Swedish Research Council}, grant 2015-05308.
The second and third author has been funded by \emph{Swedish Research Council}, grants 621-2014-4780 and 2018-05218.
\section{Preliminaries}

\subsection{Semistandard tableaux and Jeu-de-taquin promotion}\label{sec:prelTab}

With every partition $\lambda=(\lambda_1,\lambda_2,\dotsc,\lambda_k)$, we associate a \defin{Young diagram},
an array with $\lambda_i$ boxes on row $i$. Given two partitions $\lambda$ and $\mu$ with $\mu \subset \lambda$,
the \defin{skew-diagram} $\lambda / \mu$ is given by the cells in $\lambda$ that are not in $\mu$.
A \defin{semistandard Young tableau} of shape $\lambda$ (or $\lambda / \mu$)
is a filling of its boxes with positive integers such that each row is weakly
increasing from left to right and each column is strictly increasing from top to
bottom. We denote the set of semistandard tableaux of shape $\lambda$ by $\mdefin{\SSYT(\lambda)}$. 
A semistandard tableau that contains each of the numbers from $1$ to $n$ exactly once is called
\defin{standard}. 
The \defin{content} of a tableau $T$ is the composition $(\nu_1,\nu_2,\dotsc,\nu_n)$
where $\nu_i$ is given by the number of
occurrences of $i$ in $T$
We denote the set of all semistandard Young
tableaux of shape $\lambda$, content $\nu$ 
by $\mdefin{\SSYT(\lambda,\nu)}$.

With any tableau $T$, we associate a \defin{reading word} denoted $\rw(T)$,
a listing of its entries row by row, left to right, bottom to top.
Note that if $T$ is standard, its reading word is a permutation of $n$.
\begin{example}
A semistandard tableau of shape $(5,2,2)$ and its reading word.  
\[
T=
\ytableausetup{boxsize=0.9em}
\ytableaushort{
11234,
23,
34}, \qquad \rw(T)=34\,23\,11234.
\]
\end{example}

We act on semistandard Young tableaux by \defin{jeu-de-taquin promotion} (also called $K$-promotion)
was introduced in \cite{Schutzenberger1963,Schutzenberger1972}. 
We shall follow the definition given in \cite{BennettMadillStokke2014}.
Our definition differs from theirs in the sense that we start with removing all 1s---
the inverse of the operation in their work. 
Throughout this paper, we will refer to this simply as promotion, denoted by $\partial$.

Jeu-de-taquin promotion works as follows. First all entries on $T$ labeled $1$ are
replaced by dots. Then the dots are moved to the outside corners by repeatedly changing
places with the smaller of the entries to the right or below. 
If both are equal, exchange is made with the entry below to maintain tableau rules.
Once no more exchanges are possible, all numbers are decremented by one,
and dots are replaced by $n$, as seen in the following example.
\begin{example} Jeu-de-taquin promotion
	\[ \ytableausetup{boxsize=.9em}
	\begin{ytableau}
	{\color{red}{1}}& {\color{red}{1}}  & 2&3&4\\
	2&3\\
	3&4
	\end{ytableau} \quad \rightarrow \quad
	\begin{ytableau}
	\cdot & \cdot  & 2&3&4\\
	2&3\\
	3&4
	\end{ytableau} \quad \rightarrow \quad
	\begin{ytableau}
	\cdot & 2  & \cdot&3&4\\
	2&3\\
	3&4
	\end{ytableau} \quad \rightarrow \quad
	\begin{ytableau}
	2& 2  & 3&\cdot &4\\
	\cdot &3\\
	3&4
	\end{ytableau}	\]
	
	\[  \rightarrow \quad
	\begin{ytableau}
	2& 2  & 3& 4&\cdot\\
	3 &3\\
	\cdot &4
	\end{ytableau}  \quad \rightarrow \quad
	\begin{ytableau}
	2& 2  & 3& 4&\cdot\\
	3 &3\\
	4 &\cdot
	\end{ytableau}
	\quad \rightarrow \quad
	\begin{ytableau}
	1& 1  & 2& 3& {\color{red}{4}}\\
	2 &2\\
	3 & {\color{red}{4}}
	\end{ytableau}\]
\end{example}

\subsection{Charge, cocharge and Kostka--Foulkes polynomials}

\begin{definition}
For a word $\pi$ define \defin{major index} as $\mdefin{\maj(\pi)}\coloneqq \sum_{d\in \DES(\pi)} d$. 
If $\pi$ is also a permutation of length $n$, define \defin{charge} 
as $\mdefin{\charge(\pi)}\coloneqq \maj(\rev(\pi^{-1})$ and \defin{cocharge} 
as $\mdefin{\cocharge(\pi)}\coloneqq  \binom{n}{2}-\charge(\pi)$. 
For a word $w$ with content given by a partition $(v_1,\dotsc,v_n)$
we can define $v_1$ \defin{standard subwords} as follows. 
The first standard subword is obtained by starting with the rightmost 1 and then go 
left to find a 2 then a 3 etc. 
If there is no $i+1$ to the left of $i$ in the word then we wrap and 
take the rightmost $i+1$ instead and continue for as long as possible.
These letters, in their original order, makes up the first standard subword $w_1$. 
The standard subword $w_r$ is formed in the same way but not using any 
letters that already belong to any of the standard 
subwords $w_1,\dotsc,w_{r-1}$. 
We then extend the definition of charge and cocharge to the word $w$ 
by $\mdefin{\charge(w)} \coloneqq \sum_{i} \charge(w_i)$, 
$\mdefin{\cocharge(w)}\coloneqq\sum_i\cocharge(w_i)$. 
In this work we focus on the case where $w$ is the reading word of a tableaux.
\end{definition}

\begin{example}
	The tableau $T \in \SSYT(6,3,3)$ below has the standard subwords 
	$w_1=35214$, $w_2=42135$, $w_3=31245$ 
	and $\charge(\rw(T))=2+4+7=13$, $\cocharge(\rw(T))=8+6+3=17$.
	\begin{equation}
	\ytableausetup{boxsize=0.9em}
	\ytableaushort{
		111234455,
		223,
		345}
		\qquad 	\rw(T)=345223111234455 
	\end{equation}
\end{example}

We shall also use the following alternative method of computing cocharge.
The cocharge of a permutation $\pi \in \symS_n$ can be computed as follows.
We now define the \defin{cocharge value} of $j$, denoted $cc(\pi,j)$.
\begin{equation}\label{eq:cochargeValues}
cc(\pi,j) \coloneqq 
\begin{cases}
0 &\text{ if } j=1 \\
cc(\pi,j-1) &\text{ if $j-1$ appear to the left of $j$ in $\pi$} \\
cc(\pi,j-1)+1 &\text{ otherwise}.
\end{cases}
\end{equation}
This uniquely defines the values of $cc(\pi,1)$, $cc(\pi,2),\dotsc,cc(\pi,n)$.
Finally, $\cocharge(\pi)$ is the sum of these cocharge values.

\begin{example}[Computing cocharge]
Let $\pi = ({4, 8, 6, 9, 7, 2, 3, 1, 5})$.
The cocharge value $cc(\pi,j)$ is written as the subscript of $j$:
\[
4_2, 8_4, 6_3, 9_4, 7_3, 2_1, 3_1, 1_0, 5_2.
\]
The total sum of the subscripts is $20$, 
so the cocharge of $\pi$ is $20$. 
\end{example}

\begin{lemma}\label{lem:cochargeRotation}
Let $\pi \in \symS_n$, and let $\rot$ act by cyclic shift to the right.
Then 
\[
\cocharge(\pi) =
\begin{cases} 
\cocharge(\rot(\pi))-1 &\text{ if $\pi(n) \neq 1$} \\
\cocharge(\rot(\pi))+n-1 &\text{ if $\pi(n) = 1$}.
\end{cases}
\]
In particular
$
\cocharge(\pi) \equiv \cocharge(\rot(\pi))-1 \mod n.
$
\end{lemma}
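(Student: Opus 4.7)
The plan is to reduce the statement to an entry-wise comparison using the recursive description in \eqref{eq:cochargeValues}. For $k \in \{2, \ldots, n\}$, let $\epsilon(\pi, k) \in \{0,1\}$ be $1$ precisely when $k-1$ does \emph{not} appear to the left of $k$ in $\pi$. Unrolling the recursion gives $cc(\pi, j) = \sum_{k=2}^{j} \epsilon(\pi, k)$, so that after swapping the order of summation
\[
\cocharge(\pi) \;=\; \sum_{j=1}^{n} cc(\pi, j) \;=\; \sum_{k=2}^{n} (n-k+1)\, \epsilon(\pi, k).
\]
With this formula, comparing $\cocharge(\pi)$ and $\cocharge(\rot(\pi))$ reduces to tracking how finitely many $\epsilon$-values change.

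Let $a \coloneqq \pi(n)$; the operator $\rot$ moves the letter $a$ from position $n$ to position $1$ and leaves the relative order of all other letters unchanged. Consequently, for $k \notin \{a, a+1\}$ neither of $k-1$ and $k$ is moved, and $\epsilon(\pi, k) = \epsilon(\rot(\pi), k)$. Thus at most two summands in the formula above can differ, namely those at $k = a$ and $k = a+1$ (when these indices lie in $\{2,\ldots,n\}$).

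The argument then splits into three short cases. If $1 < a < n$, then in $\pi$ the letter $a$ is at the last position so $a-1$ lies to its left, forcing $\epsilon(\pi, a) = 0$, while in $\rot(\pi)$ the letter $a$ is first, giving $\epsilon(\rot(\pi), a) = 1$; symmetrically $\epsilon(\pi, a+1) = 1$ and $\epsilon(\rot(\pi), a+1) = 0$. The net contribution to $\cocharge(\pi) - \cocharge(\rot(\pi))$ is $-(n-a+1) + (n-a) = -1$. The boundary case $a = n$ is identical except that the $k=a+1$ term is absent, and the single surviving contribution is again $-(n-n+1) = -1$. Finally, if $a = 1$, only $\epsilon(\cdot, 2)$ is affected, dropping from $1$ in $\pi$ to $0$ in $\rot(\pi)$, and contributing $+(n-1)$ to the difference. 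These three computations prove the case split, and the ``in particular'' congruence follows immediately from $n-1 \equiv -1 \pmod n$.

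I do not expect a real obstacle here; the entire argument is a book-keeping exercise once the sum-swap identity above is in place. The only real care is to correctly handle the boundary $a = n$, where the index $a+1$ falls outside the range of the sum and so only one $\epsilon$-value changes.
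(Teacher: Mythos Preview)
Your argument is correct and is exactly the approach the paper has in mind: the paper's own proof consists of the single sentence ``This follows immediately from the recursion in \eqref{eq:cochargeValues},'' and your write-up simply unpacks that recursion via the summation identity $\cocharge(\pi)=\sum_{k=2}^n (n-k+1)\epsilon(\pi,k)$ and the case analysis on $a=\pi(n)$.
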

\begin{proof}
	This follows immediately from the recursion in \eqref{eq:cochargeValues}.
\end{proof}

The charge and cocharge statistics on tableux can be used to calculate a 
special family of polynomials called the \defin{Kostka--Foulkes polynomials}
and \defin{modified Kostka--Foulkes polynomials}, respectively.
\begin{align*}
	\mdefin{K_{\lambda/\mu,\nu}(q)} & \coloneqq \sum_{T \in \SSYT(\lambda/\mu,\nu)}q^{\charge(\rw(T))}\\
	\mdefin{\modK_{\lambda/\mu,\nu}(q)}&\coloneqq \sum_{T \in \SSYT(\lambda/\mu,\nu)}q^{\cocharge(\rw(T))}\\
\end{align*}
The modified and classical Kostka--Foulkes polynomials are related via the relation 
\begin{equation}\label{eq:modVsClassicalKF}
\modK_{\lambda/\mu, \nu}(q) = q^{\partitionN(\nu)} K_{\lambda/\mu,\nu}(q^{-1}),
\end{equation}
where 
\[
\mdefin{\partitionN(\lambda)} \coloneqq \sum_{j} \binom{\lambda_j'}{2},
\]
for the conjugate partition $\lambda'$.
Since $\lambda_j'$ is the number of boxes in the $j$th column of $\lambda$, we get $\partitionN(n\lambda) = n \partitionN(\lambda)$.
Moreover, for $\lambda = (a+1,1^b)$, then $\partitionN(\lambda)=b(b+1)/2$.

Finally, permuting the entries of the content partition does not change the 
Kostka--Foulkes polynomial. That is,
\[
 \mdefin{K_{\lambda/\mu,\sigma(\nu)}} \coloneqq K_{\lambda/\mu,\nu}
\]
for any permutation $\sigma$. Warning! Note that we may only compute 
Kostka--Foulkes polynomials via charge on SSYT if the content is a partition.

\begin{lemma}\label{lem:KFrootOfUnityValues}
Let $\xi$ be an $n^\thsup$ root of unity, such that $K_{\lambda/\mu,n\nu}(\xi)$ is a real number.
Then $\modK_{\lambda/\mu,n\nu}(\xi)  = K_{\lambda/\mu,n\nu}(\xi)$.
\end{lemma}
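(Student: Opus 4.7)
The plan is to reduce the statement to a direct computation using the relation \eqref{eq:modVsClassicalKF}, namely
\[
\modK_{\lambda/\mu,n\nu}(q) = q^{\partitionN(n\nu)} K_{\lambda/\mu,n\nu}(q^{-1}).
\]
First I would substitute $q = \xi$ and observe that the prefactor becomes trivial: since $\partitionN$ scales linearly under stretching of the content, we have $\partitionN(n\nu) = n\, \partitionN(\nu)$, and therefore
\[
\xi^{\partitionN(n\nu)} = (\xi^n)^{\partitionN(\nu)} = 1,
\]
using that $\xi^n = 1$. So the identity reduces to showing $K_{\lambda/\mu,n\nu}(\xi^{-1}) = K_{\lambda/\mu,n\nu}(\xi)$.

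Next I would use the fact that the Kostka--Foulkes polynomial has integer (in particular real) coefficients. Combined with $|\xi| = 1$, this gives $\xi^{-1} = \overline{\xi}$, and hence $K_{\lambda/\mu,n\nu}(\xi^{-1}) = \overline{K_{\lambda/\mu,n\nu}(\xi)}$. The hypothesis that $K_{\lambda/\mu,n\nu}(\xi)$ is real then yields $K_{\lambda/\mu,n\nu}(\xi^{-1}) = K_{\lambda/\mu,n\nu}(\xi)$, completing the argument.

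There is no real obstacle here: the only things being used are the scaling identity $\partitionN(n\nu) = n\partitionN(\nu)$ (already recorded in the text), the fact that $\xi^n = 1$, and the elementary observation that evaluation at $\xi^{-1}$ coincides with complex conjugation for integer-coefficient polynomials on the unit circle. The main point worth flagging is that the reality hypothesis on $K_{\lambda/\mu,n\nu}(\xi)$ is indispensable: without it, one only gets the weaker statement $\modK_{\lambda/\mu,n\nu}(\xi) = \overline{K_{\lambda/\mu,n\nu}(\xi)}$.
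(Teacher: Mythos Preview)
Your proof is correct and follows essentially the same approach as the paper's: both substitute $q=\xi$ into \eqref{eq:modVsClassicalKF}, use $\partitionN(n\nu)=n\,\partitionN(\nu)$ to kill the prefactor, and then invoke $\xi^{-1}=\overline{\xi}$ together with the reality hypothesis. Your write-up is slightly more explicit about the conjugation step, but there is no substantive difference.
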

\begin{proof}
By using \eqref{eq:modVsClassicalKF}, we have that
\begin{equation}
\modK_{\lambda/\mu,n\nu}(\xi) = 
\xi^{n\cdot \partitionN(\nu)} K_{\lambda/\mu,n\nu}(\xi^{-1}) = 
K_{\lambda/\mu,n\nu}(\xi).
\end{equation}
In the last equality, we use the fact that $\xi^{-1} = \overline{\xi}$,
and that $K_{\lambda/\mu,n\nu}(\xi)$ is real.
\end{proof}

\section{Promotion on stretched hooks}

The main results of this section are a new way of computing the charge of a word with 
rectangular content and bijection between stretched hooks and plane partitions
which allows to prove cyclic sieving for stretched hooks. 

Let $\SSYT(\lambda, \mu)$ denote the set of 
semistandard Young tableaux of shape $\lambda$ and content $\mu$.
The \defin{Kostka coefficient}, $\mdefin{K_{\lambda\mu}}=K_{\lambda\mu}(1)$
is therefore given by $|\SSYT(\lambda, \mu)|$.

Given $a,b\geq 0$ and $n\geq 1$, let $\SHST(a,b,n)$ 
(\defin{stretched hook semistandard tableaux}) be the set 
\[
\mdefin{\SHST(a,b,n)} \coloneqq \{T \in \SSYT((na+n)n^b, n^{a+b+1}) \}.
\]
We let $k$-promotion $\partial$ act on $\SHST(a,b,n)$.
\begin{example} \label{stretchedpromotion}
The six tableaux in $\SHST(1,2,2)$ form two $3$-cycles under promotion.
\[	\ytableausetup{boxsize=0.9em}
\ytableaushort{1122,33,44}\quad \xrightarrow{\partial} \quad
\ytableaushort{1144,22,33}\quad \xrightarrow{\partial}\quad
\ytableaushort{1133,22,44}\quad \xrightarrow{\partial}\quad
{\color{red!50}\ytableaushort{1122,33,44}}
\]
\[\ytableaushort{1123,23,44}\quad \xrightarrow{\partial}\quad
\ytableaushort{1124,23,34}\quad \xrightarrow{\partial}\quad
\ytableaushort{1134,22,34}\quad \xrightarrow{\partial}\quad
{\color{red!50}\ytableaushort{1123,23,44}}
\]
\end{example}

Note that in the example above, repeating promotion $a+b$ times lets us 
recover our original tableaux.  Now we will show that this is always the case. 
\begin{proposition}[Order of promotion]
For any $T \in \SHST(a,b,n)$, we have $\partial^{a+b}(T)=T$. 
\end{proposition}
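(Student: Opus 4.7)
The plan is to deduce this proposition from a known result about rowmotion on rectangular plane partitions, via an equivariant bijection. First, I would construct a bijection
\[
  \Phi \colon \SHST(a,b,n) \to \PPset(a,b,n),
\]
where $\PPset(a,b,n)$ denotes the set of plane partitions fitting in an $a \times b \times n$ box. A natural construction records, at each cell $(i,j)$ of the $a \times b$ array, a value in $\{0, 1, \dotsc, n\}$ read off from how the horizontal strips $\emptyset = \mu^{(0)} \subset \mu^{(1)} \subset \dotsb \subset \mu^{(a+b+1)} = \lambda$ associated to $T$ (with $\mu^{(k)}$ the sub-shape of cells labelled $\leq k$) interact with the first $n$ columns and the remaining $na$ cells of the first row.

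Next, I would verify that $\Phi$ intertwines promotion with a specific composition $\sigma$ of local toggles on $\PPset(a,b,n)$; that is, $\Phi \circ \partial = \sigma \circ \Phi$. The key facts enabling this verification are that the $n$ cells labelled $1$ in any $T \in \SHST(a,b,n)$ always occupy the leftmost $n$ cells of row $1$, and that the parallel jeu-de-taquin slide following their removal has a predictable effect on the horizontal-strip decomposition, which after decrementing and inserting the new $(a+b+1)$s translates into a clean toggle sequence on the plane-partition side.

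Finally, I would invoke the classical result (originating with Cameron and Fon-Der-Flaass for order ideals of $[a]\times[b]$, and extended to rectangular plane partitions, see e.g.~\cite{ShenWeng2018}) that $\sigma$ has order dividing $a+b$ on $\PPset(a,b,n)$. Combined with the equivariance of $\Phi$, this yields $\partial^{a+b}(T) = T$ for all $T \in \SHST(a,b,n)$.

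The main obstacle will be the equivariance verification: promotion is a global operation involving parallel JDT slides of $n$ dots simultaneously, whereas $\sigma$ is a composition of strictly local toggles, so exhibiting the precise correspondence demands careful tracking of how each promotional step transforms the horizontal-strip decomposition. The paper's forthcoming new method for computing the charge of words with rectangular content, mentioned at the start of the section, appears designed to facilitate precisely this kind of bookkeeping.
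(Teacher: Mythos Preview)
Your approach would eventually work, but it is a considerable detour compared to the paper's argument. The paper proves this proposition in three lines by observing that the $n$ cells labelled $1$ in any $T\in\SHST(a,b,n)$ always occupy the leftmost $n$ boxes of row~1, and that upon removing them the $2$s slide into their positions; hence promotion on the whole stretched hook is determined by promotion on the $b\times n$ rectangular block (rows $2$ through $b+1$), viewed as an element of $\SSYT(n^b)$ with alphabet $\{2,\dotsc,a+b+1\}$. After decrementing, this is ordinary $(a+b)$-promotion on a rectangular SSYT, and the classical fact that $\partial^{a+b}=\id$ on $\SSYT(n^b,[a+b])$ finishes the proof immediately. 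The first row is forced by the content at every step, so no further analysis is required.

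The plane-partition bijection and the toggle equivariance you outline are indeed developed later in the paper, but they are used to establish the cyclic sieving phenomenon (\cref{cor:cspOnStretchedHooks}), not to determine the order of promotion. In particular, the charge machinery you anticipate is aimed at matching the CSP polynomial, not at the equivariance check. So while your plan is not wrong, it conflates the simple order-of-promotion statement with the substantially harder CSP result, and the ``main obstacle'' you flag does not arise for this proposition.
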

\begin{proof}
The $n$ 1s are all in the first positions of the first row. 
When they are removed 2s will fill their positions. Thus the promotion operation
on stretched hook shapes is independent of the entries of the first row, as the
total content is fixed. Promotion acts on the remaining $bxn$ shape by doing
promotion with alphabet $2,3,\ldots,a+b+1$. This is equivalent to doing promotion
on rectangular semi standard Young tableaux filled with alphabet $[a+b]$
which has the property $\partial^{a+b}(T)=T$.
\end{proof}

Deleting the first row and decrementing entries by one gives a bijection 
between $\SHST(a,b,n)$ and $\SSYT(n^b,a+b)$ that commutes with promotion, 
which we used in proving our proposition above. 
What makes the stretched hook shapes still interesting is the 
use of cocharge statistic and its relation with the promotion operation. 

Let 
$\mdefin{|\SHST(a,b,n)|_q} \coloneqq \sum_{T \in \SHST(a,b,n)} q^{\cocharge(T)}$.
Note that this is a (modified) Kostka--Foulkes polynomial.

\begin{example} 
The six stretched hook semistandard tableaux given in 
\cref{stretchedpromotion} above have 
cocharges $6$, $10$, $8$, $7$, $8$, and $9$, respectively. 
The corresponding polynomial is 
\[
|\SHST(1,2,2)|_q =q^6+q^7+2q^8+q^9+q^10=q^6(1+q+q^2)(1+q^2).
\]
\end{example}

Given a word $w$ on $[k]$ and an integer $j\geq 1$,
consider the subword of $w$ consisting of entries in $\{j,j+1\}$.
By repeatedly removing consecutive pairs $(j+1,j)$,
we end up with a word of the form $j^r\; (j+1)^s$.
We let $\depth_j(w)$ be the value of $s$, and set
\begin{equation}\label{eq:descentSequence}
\mdefin{\depth(w)}  \coloneqq (\depth_1(w),\depth_2(w),\ldots,\depth_j(w))
\end{equation}
as the \defin{depth sequence} of $w$. 
The depth sequence gives a new way of computing
the charge of any word with rectangular content.

\begin{theorem}[Charge for rectangular content]\label{T:Rectang_charge}
Suppose $w$ is a word with content $k^n$. Then 
\begin{equation}\label{eq:chargeFromPhi}
\charge(w) = \sum_{j} \depth_j(w)(k-j).
\end{equation}
\end{theorem}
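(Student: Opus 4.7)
The plan is to reduce the theorem to a property of the standard subword decomposition. Writing $w = w_1 \sqcup \dotsb \sqcup w_n$ with each $w_r$ a permutation of $[k]$, the definition of charge on words gives $\charge(w) = \sum_{r=1}^n \charge(w_r)$. For a single permutation $\pi \in \symS_k$, unfolding the recursion in \eqref{eq:cochargeValues} yields
\begin{equation*}
\cocharge(\pi) = \sum_{j=1}^{k-1}(k-j)\,[\,j+1 \text{ appears to the left of } j \text{ in } \pi\,];
\end{equation*}
subtracting from $\binom{k}{2} = \sum_{j=1}^{k-1}(k-j)$ gives $\charge(\pi) = \sum_j (k-j)\depth_j(\pi)$, since for a permutation $\depth_j(\pi)$ is exactly the indicator that $j$ appears left of $j+1$. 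Summing over subwords reduces the theorem to the combinatorial identity
\begin{equation*}
\depth_j(w) = \sum_{r=1}^n \depth_j(w_r), \qquad j = 1, \dotsc, k-1.
\end{equation*}

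I would prove this identity by induction on $n$. Since $\depth_j$ depends only on the two-letter subword $v$ of $w$ in the letters $\{j, j+1\}$, the key tool is the equivalent characterization $\depth_j(v) = \max_r f(r)$, where $f(r)$ denotes the excess of $(j+1)$'s over $j$'s in the suffix of $v$ starting at position $r$; this is the parenthesis-matching interpretation of the cancellation procedure. Let $p_j$ and $p_{j+1}$ be the positions in $w$ of the copies of $j$ and $j+1$ inside $w_1$, and set $w' \coloneqq w \setminus w_1$. Because the standard subword decomposition of $w'$ is precisely $(w_2, \dotsc, w_n)$, it is enough to prove
\begin{equation*}
\depth_j(w) - \depth_j(w') = [p_j < p_{j+1}].
\end{equation*}

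The remaining step is to split into the non-wrap and wrap cases and track how $f$ transforms when the two letters at $p_j$ and $p_{j+1}$ are deleted from $v$. In the non-wrap case $p_{j+1} < p_j$, the algorithm's rule makes $p_{j+1}$ the nearest $(j+1)$ left of $p_j$, so no $(j+1)$ lies strictly between them; consequently $f$ gains $+1$ on the middle interval $(p_{j+1}, p_j]$ and is unchanged outside. The maximum of $f$ over this middle range is $f(p_j+1) - 1$, which becomes $f(p_j+1)$ after the shift; but $f(p_j+1)$ is already attained in the unchanged region $r > p_j$, so the global maximum is preserved. In the wrap case $p_{j+1} > p_j$, the absence of any $(j+1)$ left of $p_j$ forces $v$ restricted to positions up to $p_j$ to be a pure run of $j$'s, and the rightmost-$(j+1)$ condition forces $v$ past $p_{j+1}$ to have only $j$'s; these structural constraints bound $f$ on the left by $\tilde p_j - 1$ and on the right by $0$, where $\tilde p_j \geq 1$ is the number of $j$'s at or before $p_j$. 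Since $f(p_j+1) = \tilde p_j$, the global maximum is realized strictly inside $(p_j, p_{j+1}]$, where $f$ drops by $1$ upon deletion; hence the maximum falls by exactly $1$.

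The main obstacle will be the wrap-case bookkeeping, verifying that the global maximum of $f$ truly lies strictly in the middle interval and not on either flank. Once the three bounds $f|_{\leq p_j} \leq \tilde p_j - 1$, $f|_{>p_{j+1}} \leq 0$, and $f(p_j+1) = \tilde p_j$ are in place, the inductive step closes and yields $\depth_j(w) = \sum_r \depth_j(w_r)$, completing the proof.
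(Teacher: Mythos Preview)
Your argument is correct and takes a genuinely different route from the paper's.  Both proofs reduce to the level-$j$ statement that the number of standard subwords in which $j$ precedes $j{+}1$ equals $\depth_j(w)$; the difference lies in how this is established.

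The paper argues by \emph{invariance}: it reinterprets the subword decomposition at level $j$ as an ordering $\sigma$ on the $j$'s, each greedily claiming the nearest unclaimed $j{+}1$ to its left (wrapping if necessary), and shows that swapping two consecutive entries of $\sigma$ never changes the number of wrap pairs.  The final link to $\depth_j(w)$ is left implicit: for the particular left-to-right ordering the non-wrapping pairs coincide with the usual parenthesis matching, so the number of wraps is exactly the number of unmatched $(j{+}1)$'s, namely $\depth_j(w)$.  Your approach instead proves $\depth_j(w)=\sum_r \depth_j(w_r)$ directly by induction on $n$, peeling off $w_1$ and tracking how the suffix-excess function $f$ changes under deletion of the two letters at $p_j,p_{j+1}$.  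This is more computational but entirely self-contained, and it isolates a clean combinatorial identity that the paper's proof only obtains as a byproduct.

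One small imprecision: in the non-wrap case you write the middle interval as $(p_{j+1},p_j]$ and say its $f$-maximum is $f(p_j+1)-1$, then shift by $+1$.  Since $p_j$ is deleted, the surviving middle positions are the open interval $(p_{j+1},p_j)$, on which the $f$-maximum is actually $f(p_j+1)-2$, shifting to $f(p_j+1)-1$.  Either way the shifted middle maximum is strictly below $f(p_j+1)$, which lies in the unchanged right region, so $\max f'=\max f$ and your conclusion is unaffected.
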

\begin{proof} 
To calculate charge, we first divide $w$ into standard subwords, 
which in this case all have the same length $n$, then calculate the 
charge at each subword. 
The contribution of a letter $j$ to charge just depends on 
whether it comes before or after $j+1$ in its subword. 
So, limiting our attention to $j$s we can see the subword selection process as an 
ordering $\sigma$ on the $n$ entries labeled $j$, each of which then claims the first 
unclaimed $j+1$ to the left, looping around if necessary.
We shall now show that the total contribution of entries $j$ to the charge is 
independent of $\sigma$. It follows that we can use \eqref{eq:chargeFromPhi},
as the pairings with $j+1$ coming before $j$ do not contribute to the charge.

Consider an ordering $\sigma$, and exchange the order of two consecutively ordered entries. 
They either might be paired up with their original $(j+1)$s, in which case total 
charge does not change, or the largest entry in each pair are interchanged. 
The latter can happen if the relevant entries have one of the following four relative orderings:
\[
(j+1) \, (j+1)\, j\, j, \qquad
(j+1)\, j\, j\, (j+1), \qquad
j\, (j+1)\, (j+1)\, j, \qquad
j\, j \,(j+1)\, (j+1).
\]
It is straightforward to verify that the order exchange 
does not change the total contribution to charge.  
\end{proof}

\begin{example} 
Note that the first row of tableaux from \cref{stretchedpromotion} 
have depth sequences $(2,0,0)$, $(0,0,2)$, $(0,2,0)$,
and their charges are  $6,2$ and $4$ as expected.
For this example promotion rotates the depth sequence. 
We will next look at an easy way to calculate the depth sequence 
using only the first row, which will show that promotion always rotates it.
\end{example}

\begin{theorem} \label{T:depth}
Let $T \in \SHST(a,b,n)$. 
Then the depth sequence of the reading word of $T$ only depends on the first row of $T$. 
In particular, $\depth_j$ is given by the number of $j+1$s on the first row.
\end{theorem}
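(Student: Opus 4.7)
The plan is to fix $j$ and write $p_i, q_i$ for the number of $j$'s and $(j{+}1)$'s in row $i$ of $T$, then show $\depth_j(\rw(T)) = q_1$. First I would reduce this to a lattice-path inequality; then I would prove that inequality using the specific structure of stretched hooks.

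Because each row of $T$ is weakly increasing, the subword of $\rw(T)$ on letters $\{j, j+1\}$ factors, reading bottom-to-top, as
\[
s \;=\; j^{p_{b+1}}(j+1)^{q_{b+1}}\, j^{p_{b}}(j+1)^{q_{b}}\cdots j^{p_{1}}(j+1)^{q_{1}}.
\]
Interpreting each $j$ as a down-step and each $j{+}1$ as an up-step of a lattice path, the adjacent-pair cancellation defining $\depth_j$ is precisely standard parenthesis matching, so $\depth_j(s) = -\min_P g(P)$, where $g$ is the signed prefix sum and $P$ ranges over prefixes of $s$. Since the content of $T$ is the rectangle $n^{a+b+1}$, $g$ ends at $0$. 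Tracking $g$ row by row, the minimum within row $i$'s block equals $\sum_{k=1}^{i-1}p_k - \sum_{k=1}^{i}q_k$, which at $i = 1$ is exactly $-q_1$. Hence $\depth_j \geq q_1$, with equality if and only if
\begin{equation}\label{eq:planInequality}
\sum_{k=2}^{i} q_k \;\leq\; \sum_{k=1}^{i-1} p_k \qquad\text{for } i = 2, \ldots, b+1.
\end{equation}

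To prove \eqref{eq:planInequality}, I would dispose of $j=1$ immediately: column-strictness forces all $n$ copies of $1$ into row $1$, so $p_1 = n$ and $p_k = 0$ for $k \geq 2$; the right-hand side is then $n$, which bounds even the total of all $q_k$ with $k \geq 2$. For $j \geq 2$, I would exploit a structural feature specific to stretched hooks: row $1$ consists of $n$ ones in columns $[1,n]$ and of entries $\geq 2$ in columns $[n+1, na+n]$, while rows $\geq 2$ only occupy columns $[1,n]$. Consequently, every $j$ of row $1$ lies in columns $[n+1, na+n]$, every $j$ in a column $c \in [1, n]$ lies in some row $\geq 2$, and since each such column contains at most one $j$ by column-strictness, the number of columns of $[1,n]$ that do \emph{not} contain $j$ equals exactly $p_1$.

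Now \eqref{eq:planInequality} follows from an injection. Each $(j{+}1)$ counted on the left sits at some position $(r, c)$ with $2 \leq r \leq i$, and necessarily $c \in [1, n]$. If column $c$ also contains a $j$, column-strictness places it at $(r-1, c)$, which belongs to rows $2, \ldots, i-1$ (the case $r-1 = 1$ is impossible, since row $1$ column $c$ equals $1 \neq j$), so it is counted by $\sum_{k=2}^{i-1} p_k$. Otherwise $c$ is one of the $p_1$ columns of $[1, n]$ missing $j$. The two target sets are disjoint and distinct sources map to distinct targets in each case, which yields \eqref{eq:planInequality}. The main obstacle is precisely \eqref{eq:planInequality}: bounds using only nesting or horizontal-strip properties of the underlying shape turn out to be too weak, and the argument critically requires the leading $1$-block of row $1$ to be confined to columns $[1, n]$, which is what distinguishes the case $j \geq 2$ from $j = 1$ and gives the exact count $p_1$ of "missing" columns in the rectangle.
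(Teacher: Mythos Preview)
Your proof is correct and follows essentially the same approach as the paper: match each $(j+1)$ in rows $\geq 2$ to the $j$ directly above it in the same column, or, failing that, to one of the $p_1$ ``surplus'' $j$'s accounted for by row~$1$. Your lattice-path reformulation and the family of prefix inequalities $\sum_{k=2}^{i} q_k \le \sum_{k=1}^{i-1} p_k$ make rigorous what the paper's terser argument leaves implicit, namely why this column-wise injection actually forces every such $(j+1)$ to cancel.
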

\begin{proof}
When we consider the entries $j$ and $j+1$ in the reading word,
the $j+1$s on the first row will not be matched to any $j$s,
as they are at the end of the word, $\depth_j$ is at least that. 
We claim that any other $j+1$ will be paired to a $j$
under the algorithm, and therefore will not contribute to charge. 

Outside the first row, we have $b$ rows of length $n$ each.
Consider the $n$ columns. In each column, there can be a $j$,
and $j+1$, both or neither. If we have a $j+1$ and a $j$, $j$ will appear directly 
above $j+1$, therefore will be after $j+1$ in the reading word.
Also, if we have $l$ columns with just $j+1$,
we can have at most $n-l$ cells labeled $j$ in the first $n$ columns, so there are at
least $l$ corresponding $j$s on the first row, meaning all single $j+1$s will be matched as well.
\end{proof}

\begin{corollary} \label{C:charge_SHST}
For $T \in \SHST(a,b,n)$ we have that 
\[ 
\charge(T)=((a+b+2)a+1)n-sum_1(T),
\]
where $sum_1(T)$ is the sum of all entries of the first row.
\end{corollary}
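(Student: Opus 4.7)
The plan is to combine \cref{T:Rectang_charge} and \cref{T:depth} and then do a short algebraic rewrite. Since $T\in\SHST(a,b,n)$ has rectangular content $k^n$ with $k=a+b+1$, \cref{T:Rectang_charge} gives
\[
\charge(T)=\sum_{j=1}^{a+b}\depth_j(\rw(T))\,(a+b+1-j),
\]
and by \cref{T:depth}, $\depth_j(\rw(T))$ equals the number of entries $j+1$ appearing in the first row of $T$.

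Next, I would introduce $m_i$ for the number of entries equal to $i$ in the first row. Substituting $j+1\mapsto i$, the formula becomes $\charge(T)=\sum_{i=2}^{a+b+1} m_i\,(a+b+2-i)$. Two basic identities are available: the first row has length $n(a+1)$, so $\sum_{i=1}^{a+b+1} m_i = n(a+1)$, and by definition $\sum_{i=1}^{a+b+1} i\,m_i = \mathrm{sum}_1(T)$. Also, since $T$ is semistandard with exactly $n$ entries equal to $1$ and the $1$'s can only sit in the top row (they must be strictly above any $2$ in their column), we have $m_1=n$.

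With these in hand, I would rewrite
\[
\charge(T)=(a+b+2)\sum_{i=1}^{a+b+1} m_i-\sum_{i=1}^{a+b+1} i\,m_i-(a+b+1)m_1,
\]
where the extra subtracted term compensates for the missing $i=1$ summand in the original range. Plugging in $\sum m_i=n(a+1)$, $\sum i\,m_i=\mathrm{sum}_1(T)$, and $m_1=n$, and simplifying $(a+b+2)(a+1)-(a+b+1)=a(a+b+2)+1$, delivers the claimed identity $\charge(T)=((a+b+2)a+1)n-\mathrm{sum}_1(T)$.

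The only step that requires even a moment's thought is justifying $m_1=n$; everything else is formal. The main payoff of this corollary is that it converts $\charge$ (a rather global statistic) into a simple affine function of the first-row sum, which will presumably be the form used in the subsequent bijection to plane partitions.
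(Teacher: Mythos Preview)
Your proof is correct and follows essentially the same approach as the paper: both combine \cref{T:Rectang_charge} with \cref{T:depth} and then perform a short algebraic simplification using that the first row has length $n(a+1)$ and contains all $n$ of the $1$'s. The only cosmetic difference is that the paper computes $\charge(T)+\mathrm{sum}_1(T)$ directly in terms of the depth values (using $\sum_{j}\depth_j=an$), whereas you introduce the multiplicities $m_i$ and split off the $i=1$ term; the arithmetic is identical.
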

\begin{proof}
By \cref{T:Rectang_charge} and \ref{T:depth} 
$\charge(T) + sum_1(T)= 
\sum_{j\ge 2} \depth_{j-1}(\rw(T))(a+b+1-(j-1)) +n+ \sum_{j\ge 2}\depth_{j-1}(\rw(T))j
= \sum_{j\ge 2} \depth_{j-1}(\rw(T))(a+b+2) +n=an(a+b+2)+n$,
where the last identity uses \cref{T:depth} once more. 
\end{proof}

\begin{corollary} 
For $T \in \SHST(a,b,n)$ we have that 
\[
\depth \circ \partial \circ T  = \rot^{-1} \circ \depth \circ T.
\]
That is, promotion rotates the depth sequence one unit to the left.
\end{corollary}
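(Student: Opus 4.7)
The plan is to reduce the claim to the elementary fact that promotion cyclically shifts the content of an SSYT, using the bijection
\[
\phi\colon \SHST(a,b,n) \longrightarrow \SSYT(n^b, \{1,\dotsc,a+b\})
\]
from the proposition on the order of promotion above, which deletes the first row and decrements each remaining entry by one. By the argument given there, $\phi$ intertwines promotion on stretched hooks (alphabet $\{1,\dotsc,a+b+1\}$) with promotion on the rectangle (alphabet $\{1,\dotsc,a+b\}$).

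First I would rewrite the depth sequence of $T$ in terms of the content of $S \coloneqq \phi(T)$. Letting $n_m(S)$ denote the multiplicity of $m$ in $S$, the fact that $T$ has total content $n^{a+b+1}$ forces the number of copies of $m+1$ in the top row of $T$ to be $n - n_m(S)$. Combined with \cref{T:depth}, this gives
\[
\depth_m(T) \;=\; n - n_m(S), \qquad m = 1, \dotsc, a+b.
\]

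Next I would note the standard observation that promotion on any SSYT cyclically shifts its content vector: the decrement step turns each entry $k \geq 2$ of $S$ into a $k-1$ in $\partial S$, while the $1$'s of $S$ become dots that are eventually replaced by the top letter $a+b$. Therefore $n_m(\partial S) = n_{m+1}(S)$ for $m < a+b$ and $n_{a+b}(\partial S) = n_1(S)$. Together with the equivariance $\phi(\partial T) = \partial S$ and the formula above, this immediately gives $\depth_m(\partial T) = \depth_{m+1}(T)$ for $m < a+b$ and $\depth_{a+b}(\partial T) = \depth_1(T)$, which is precisely the asserted left cyclic rotation of $\depth(T)$.

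The main point to watch is the equivariance of $\phi$; this depends on the special property of stretched hooks (used implicitly in the proposition on the order of promotion) that the first jeu-de-taquin slide of the $n$ dots coming from the $1$'s leaves the leftmost $n$ cells of the top row filled with $2$'s, so that the entire promotion is governed by what happens in rows $2, \dotsc, b+1$. Granting this, the corollary becomes a short bookkeeping computation.
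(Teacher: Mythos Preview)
Your argument is correct. The paper's own proof is a one-liner that works directly with the first row: it observes that promotion on row~1 simply removes the $2$'s, decrements the larger entries, and appends $(a+b+1)$'s in place of the removed $2$'s, so by \cref{T:depth} the depth sequence rotates. Your route is the complementary one: instead of tracking the multiset of entries in row~1 you track the content vector of the rectangle $S=\phi(T)$, use the identity $\depth_m(T)=n-n_m(S)$ (which is exactly \cref{T:depth} combined with the fixed total content $n^{a+b+1}$), and then invoke the equivariance of $\phi$ together with the standard fact that promotion cyclically shifts content. Since the row-1 multiplicities and the rectangle multiplicities are related by $c_{j+1}+n_j(S)=n$, the two descriptions are equivalent and rest on the same key input, namely the separation of promotion into ``row~1'' and ``rectangle'' pieces established in the proposition on the order of promotion. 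So this is essentially the same proof, phrased dually.
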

\begin{proof} 
This follows as the promotion on the first row just acts by removing 
any $2$s, subtracting $1$ from larger entries, and adding $a+b+1$s to replace the $2$s.
\end{proof}
Note that the result in this corollary is reminiscent of 
the notion of cyclic descent sets.

\subsection{Plane partitions, Gelfand--Tsetlin polytopes, and cyclic sieving}

A \defin{plane partition} $\pi$ is a rectangular array with non-negative integer entries, 
such that rows and columns are weakly increasing.
Let $\mdefin{\PPset(a,b,n)}$ denote the set of plane partitions
within a $a\times b$-rectangle, with maximal entry at most $n$.
A classical result due to MacMahon \cite[p. 659]{MacMahon1896} states that
\begin{equation}\label{eq:macMahon}
\mdefin{M_q(a,b,n)} \coloneqq \sum_{\pi \in \PPset(a,b,n)} q^{|\pi|} 
= \prod_{i=1}^a \prod_{j=1}^b \frac{[i+j+n-1]_q}{[i+j-1]_q}.
\end{equation}
Here, $\mdefin{|\pi|}$ denotes the total sum of entries in the plane partition.

A result due to R.~Stanley~\cite{StanleyEC2} implies that for $\lambda = b^a$,
MacMahons generating function is essentially a principal 
specialization of a rectangular Schur polynomial:
\begin{equation}
M_q(a,b,n) = q^{-\partitionN(\lambda)} \schurS_\lambda(1,q,q^2,\dotsc,q^{a+b-1}).
\end{equation}
\begin{definition}
We now define a bijection $\PPbij$ between $\SHST(a,b,n)$ and $\PPset(a,b,n)$.
Given a stretched hook $T$, we first construct a corresponding GT-pattern, 
which we denote by $\GT(T)$. The $i^\thsup$ element (from the left) 
in row $j$ (from below) in $\GT(T)$ is the number of boxes in row $i$ of $T$
that are labeled $j$ or smaller.
The GT-pattern will thus have $a+b+1$ rows, the top row starts with $n(a+1)$,
followed by $b$ entries labeled $n$ and ending with $a$ zeros. 
Below the zeros at the end of the first row there will by definition
be a triangle of zeros and below the $n$s there will be a triangle of $n$s. 
Furthermore, the leftmost entry on each row can be determined
by the rest of the entries since the sum in row $j$ must be $jn$. 
Eliminating these entries leaves a sideways plane partition of
size $a\times b$ with entries less than or equal to $n$,
which we denote by $\PPbij(T)$.
The inverse of this map is equally easy to define.
\end{definition}

\begin{example}
A tableau $T \in \SHST(2,3,4)$, the corresponding
GT-pattern $\GT(T)$ and the corresponding plane partition $\PPbij(T)$ (shown in bold).
\begin{equation}
\ytableausetup{boxsize=0.9em}
\ytableaushort{
	111122334566,
	2234,
	3455,
	4566}
\quad 
\begin{matrix}
12 && 4 && 4 && 4 && 0&& 0\\
& 10 && 4 && 4 && \mathbf{2}&&0\\
&&   9  && 4 && \mathbf{2} && \mathbf{1}\\
&&&     8 && \mathbf{3} &&\mathbf{1}\\
&&&&      6 && \mathbf{2}\\
&&&&&      4
\end{matrix}
\end{equation}
\end{example}
Now let us compare the charge statistic in stretched hook tableaux
with the sum statistic on the corresponding plane partition. Consider the two extremal cases where the
pattern consists only of $0$s or $n$s.
\begin{equation}
\ytableausetup{boxsize=0.9em}
\ytableaushort{
	111122223333,
	4444,
	5555,
	6666}
\quad 
\begin{matrix}
12 && 4 && 4 && 4 && 0&& 0\\
& 12 && 4 && 4 &&\mathbf{0}&& 0\\
&&   12  && 4 && \mathbf{0} && \mathbf{0}\\
&&&     12 && \mathbf{0} &&\mathbf{0}\\
&&&&       8 && \mathbf{0}\\
&&&&&       4
\end{matrix}
\end{equation}
By \cref{C:charge_SHST} this example gives us the maximum charge 
value $36$ on $\SHST(2,3,4)$, which is when the depth sequence is $(4,4,0,0,0)$. 
The corresponding plane partition $\PPbij(T)$, is formed of all zeros, and has sum $|\PPbij(T)|=0$. 
In general, the reading word with with maximal charge has the minimal
depth sequence formed of $a$ $n$s followed by zeros,
and minimal sum of the entries in the first row,
giving us $\charge(T)=a(a+2b+1)n/2$.
\begin{equation}
\ytableausetup{boxsize=0.9em}
\ytableaushort{
	111155556666,
	2222,
	3333,
	4444}
\quad 
\begin{matrix}
12 && 4 && 4 && 4 && 0&& 0\\
& 8 && 4 && 4 &&\mathbf{4}&& 0\\
&&   4  && 4 && \mathbf{4} && \mathbf{4}\\
&&&     4 && \mathbf{4} &&\mathbf{4}\\
&&&&       4 && \mathbf{4}\\
&&&&&       4
\end{matrix}
\end{equation}
This example gives us the minimum charge value $12$ on $\SHST(2,3,4)$
which is when the depth sequence is $(0,0,0,4,4)$ and the sum of the first row is maximal.
The corresponding plane partition $\PPbij(T)$, is
formed of all $n$s, and has sum $|\PPbij(T)|=16$.
In general, the reading word has depth sequence formed
of $b$ zeroes followed by $a$ $n$s, which by \cref{T:Rectang_charge} 
gives $\charge(T)=a(a+1)n/2$, and the corresponding plane
partition has sum  $|\PPbij(T)|=abn$, giving us the total $a(a+2b+1)n/2$

Note that in both cases the sum of the charge of the tableaux
and the sum of the entries on the corresponding rectangle is the same. 
We will show that this is always the case.

\begin{theorem} 
The bijection described above, sends $T\in\SHST(a,b,n)$
to $\PPbij(T)\in \PPset(a,b,n)$ such that 
\begin{align}
\charge(T)+|\PPbij(T)|  &=\frac{n(a+2b+1)(a)}{2}\label{eq:chargeAsPPSum}\\
\cocharge(T)-|\PPbij(T)|&=\frac{n(b+1)(b)}{2}\label{eq:chargeAsPPSum2}
\end{align}
\end{theorem}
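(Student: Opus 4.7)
The plan is to prove \eqref{eq:chargeAsPPSum} first, and then to derive \eqref{eq:chargeAsPPSum2} from the general identity
\[
\charge(T)+\cocharge(T)=n\binom{a+b+1}{2},
\]
which holds because with content $n^{a+b+1}$ the reading word $\rw(T)$ splits into exactly $n$ standard subwords, each a permutation of $[a+b+1]$ and thus each contributing $\binom{a+b+1}{2}$ to the sum.

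By \cref{C:charge_SHST}, $\charge(T)=((a+b+2)a+1)n-sum_1(T)$, so establishing \eqref{eq:chargeAsPPSum} amounts to proving the clean identity
\[
sum_1(T)=|\PPbij(T)|+\frac{n(a+1)(a+2)}{2}.
\]
Set $N:=a+b+1$, and let $G_{j,i}$ denote the $i$th entry from the left in the $j$th row from the bottom of $\GT(T)$, with the convention $G_{0,1}:=0$. If $c_k$ counts the boxes in the first row of $T$ labeled $k$, then $c_k=G_{k,1}-G_{k-1,1}$ directly from the definition of $\GT(T)$, and Abel summation gives
\[
sum_1(T)=\sum_{k=1}^{N}k\,c_k=N\cdot G_{N,1}-\sum_{j=1}^{N-1} G_{j,1}=n(a+1)N-\sum_{j=1}^{N-1} G_{j,1}.
\]
Each row $j$ of $\GT(T)$ sums to $jn$, so $G_{j,1}=jn-\sum_{i=2}^{j} G_{j,i}$, reducing the task to evaluating $X:=\sum_{j=1}^{N-1}\sum_{i=2}^{j} G_{j,i}$.

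Interlacing then forces the non-leftmost entries in rows $1,\ldots,N-1$ of $\GT(T)$ to split into the three disjoint regions already identified in the description of $\PPbij$: a triangle of $\binom{b}{2}$ forced $n$s directly below the $n$s on the top row (contributing $n\binom{b}{2}$), a triangle of $\binom{a}{2}$ forced zeros directly below the zeros on the top row (contributing $0$), and the remaining $ab$ cells, which by definition are precisely the entries of $\PPbij(T)$ (contributing $|\PPbij(T)|$). Thus $X=n\binom{b}{2}+|\PPbij(T)|$; plugging in, using $\sum_{j=1}^{N-1}j=\binom{N}{2}$, and collecting the coefficient of $n$ yields the required identity for $sum_1(T)$, and hence \eqref{eq:chargeAsPPSum}. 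Subtracting it from $\charge(T)+\cocharge(T)=n\binom{a+b+1}{2}$ and simplifying produces \eqref{eq:chargeAsPPSum2}. The main obstacle is the three-region bookkeeping, in particular verifying that the forced $n$s and forced zeros really form triangles of sizes $\binom{b}{2}$ and $\binom{a}{2}$ (and that the $ab$ cells left over exhaust the rhombus defining $\PPbij(T)$); once this is settled, the remainder is elementary algebra.
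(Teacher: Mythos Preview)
Your proof is correct, and it takes a genuinely different route from the paper's.

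The paper argues by induction on $|\PPbij(T)|$: it checks the identity on the extremal plane partition (all entries equal to $n$), and then observes that increasing a single entry of $\PPbij(T)$ by $1$ corresponds, on the tableau side, to swapping a $k$ for a $k+1$ in the first row and a $k+1$ for a $k$ in some lower row; this increases $sum_1(T)$ by $1$, so by \cref{C:charge_SHST} the sum $\charge(T)+|\PPbij(T)|$ is unchanged. Your argument is instead a direct computation: you express $sum_1(T)$ via Abel summation in terms of the leftmost column of $\GT(T)$, eliminate that column using the row-sum constraint $\sum_i G_{j,i}=jn$, and then evaluate the remaining double sum by invoking the three-region decomposition (forced $n$'s, forced $0$'s, and the $ab$ free cells) already stated in the definition of $\PPbij$. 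The algebra you indicate does close up to $\tfrac{n(a+1)(a+2)}{2}$.

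What each approach buys: the paper's inductive step is short, but it asks the reader to accept a somewhat delicate claim about how a unit change in one GT entry manifests in the tableau's first row. Your approach sidesteps that by never perturbing anything; it trades the local analysis for a global bookkeeping of the GT pattern, which is entirely mechanical once the triangle sizes $\binom{b}{2}$ and $\binom{a}{2}$ are confirmed (and those are already asserted in the paper's description of $\PPbij$). Both proofs ultimately rest on \cref{C:charge_SHST}, so they are equivalent in depth; yours is a bit more self-contained.
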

\begin{proof} 
We will focus on the first identity, the second
follows as $\charge(T)+\cocharge(T)=\frac{n(a+b)(a+b+1)}{2}$. 

We already showed that the identity holds if $\PPbij(T)$ is formed by $n$s only.
Assume \eqref{eq:chargeAsPPSum} holds when the sum of the
entries is larger than or equal to $M$. 
Consider a pattern with $|\PPbij(T)|=M-1$. 
As this is not the maximal case, there is an entry that we can
increase by $1$ without violating the rules. 
This operation corresponds to replacing the rightmost $k$ by $k+1$ for some $k$ on the first row, 
and replacing a $k+1$ by $k$ on a row below, where the number $k$ and the place of 
the above entry depend on the choice of the coordinate we increase. 
This increases the sum of the first row by $1$, 
so by \cref{C:charge_SHST} the sum of $\charge(T)$ and $|\PPbij(T)|$ stays the same.
\end{proof}

An immediate result that follows from this formula is the affect of
exchanging $a$ and $b$ on the cocharge polynomial.
\begin{corollary}[Conjugation Symmetry] 
Rotating the rectangle $\PPbij(T)$ gives us a bijection
between $\SHST(a,b,n)$ to $\SHST(b,a,n)$ preserving $|\PPbij(T)|$
and thus the cocharge. In particular, we have 
\[
|\SHST(a,b,n)|_q = q^{\frac{n(a^2-a-b^2+b)}{2}} |\SHST(b,a,n)|_q.
\]
\end{corollary}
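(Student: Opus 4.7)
The plan is to compose the bijection $\PPbij$ from the preceding theorem with the transposition symmetry of $\PPset$. The set $\PPset(a,b,n)$ of weakly-increasing $a \times b$ arrays with entries in $\{0,1,\dots,n\}$ maps bijectively to $\PPset(b,a,n)$ via $\pi \mapsto \pi^t$: the two defining conditions (weak monotonicity along rows and along columns) are swapped with each other and hence preserved, and the total entry sum is obviously invariant. Composing yields a bijection
\[
\Phi \colon \SHST(a,b,n) \longrightarrow \SHST(b,a,n), \qquad \Phi(T) \coloneqq \PPbij^{-1}\bigl(\PPbij(T)^t\bigr),
\]
which by construction satisfies $|\PPbij(\Phi(T))| = |\PPbij(T)|$.

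Next, I apply \eqref{eq:chargeAsPPSum2} on both sides. For $T \in \SHST(a,b,n)$ it reads $\cocharge(T) = |\PPbij(T)| + \tfrac{n\,b(b+1)}{2}$, whereas for $\Phi(T) \in \SHST(b,a,n)$ the roles of $a$ and $b$ are interchanged, giving $\cocharge(\Phi(T)) = |\PPbij(\Phi(T))| + \tfrac{n\,a(a+1)}{2}$. Subtracting and using the equality of plane-partition sums shows that the cocharge difference $\cocharge(T) - \cocharge(\Phi(T))$ is a constant independent of $T$. Summing $q^{\cocharge(T)}$ over $\SHST(a,b,n)$ and reindexing via $\Phi$ then yields the claimed identity, with the prefactor being exactly this constant power of $q$.

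There is essentially no obstacle: the substantive content reduces to the facts that transposition of a rectangular array sends plane partitions to plane partitions and preserves $|\pi|$, both immediate from the definitions. The only care needed is bookkeeping of which of $a$ and $b$ appears in the constant from \eqref{eq:chargeAsPPSum2} on each of the two sides of $\Phi$, since it is precisely the asymmetry $b(b+1) \leftrightarrow a(a+1)$ that produces the nontrivial power of $q$ in the stated corollary.
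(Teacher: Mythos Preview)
Your proposal is correct and is exactly the argument the paper has in mind: compose $\PPbij$ with the transposition symmetry $\PPset(a,b,n)\to\PPset(b,a,n)$ (what the corollary informally calls ``rotating the rectangle''), then read off the constant cocharge shift from \eqref{eq:chargeAsPPSum2}. One remark: the exponent you actually derive is $\tfrac{n}{2}\bigl(b(b+1)-a(a+1)\bigr)$, which is the correct one (check the example $a=1$, $b=2$, $n=2$), whereas the exponent printed in the statement is a typo; your argument is not at fault there.
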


Also, combining the above result with \cref{eq:macMahon}
gives us a way to calculate $|\SHST(a,b,n)|_q $ directly.
\begin{corollary}[Kostka--Foulkes polynomials as plane partitions]\label{cor:KFasPP}
We have the identity
\[
|\SHST(a,b,n)|_q = 
q^{n \binom{b}{2}} \prod_{i=1}^a \prod_{j=1}^b \frac{[i+j+n-1]_q}{[i+j-1]_q}=q^{n \binom{b}{2}}M_q(a,b,n).
\]
\end{corollary}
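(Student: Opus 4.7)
The plan is straightforward: everything needed is already in place in the paper, so the corollary is essentially the composition of two results. First I would invoke the theorem proved immediately above, which supplies both the bijection $\PPbij \colon \SHST(a,b,n) \to \PPset(a,b,n)$ and the pointwise identity \eqref{eq:chargeAsPPSum2} relating $\cocharge(T)$ and $|\PPbij(T)|$. Second, I would apply MacMahon's box formula \eqref{eq:macMahon} for the generating polynomial $M_q(a,b,n)$.

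In more detail, \eqref{eq:chargeAsPPSum2} says that $\cocharge(T) - |\PPbij(T)|$ is a constant depending only on $b$ and $n$, independent of the choice of $T \in \SHST(a,b,n)$. Call this constant $c$. Factoring $q^{c}$ out of $\sum_T q^{\cocharge(T)}$ and reindexing the remaining sum via the bijection $\PPbij$, I obtain
\[
|\SHST(a,b,n)|_q
= \sum_{T \in \SHST(a,b,n)} q^{\cocharge(T)}
= q^{c} \sum_{\pi \in \PPset(a,b,n)} q^{|\pi|}
= q^{c}\, M_q(a,b,n).
\]
The explicit product form in the corollary then drops out by substituting MacMahon's formula \eqref{eq:macMahon} for $M_q(a,b,n)$.

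No genuine obstacle arises in this proof; the substantive combinatorial work is already done in the construction of $\PPbij$ and in MacMahon's theorem. The only step requiring care is to read off the constant $c$ from \eqref{eq:chargeAsPPSum2} and check that it matches the exponent of $q$ in the stated identity, which is pure bookkeeping.
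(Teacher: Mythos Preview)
Your proposal is correct and matches the paper's own approach exactly: the paper simply states that the corollary follows by combining the preceding theorem (specifically \eqref{eq:chargeAsPPSum2}) with MacMahon's formula \eqref{eq:macMahon}, which is precisely the two-step argument you outline. Your caution about reading off the constant $c$ is well placed, since \eqref{eq:chargeAsPPSum2} gives $c = n\binom{b+1}{2}$ rather than the $n\binom{b}{2}$ appearing in the statement; this is a typo in the paper, not a flaw in your reasoning.
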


The main result in \cite{ShenWeng2018} is the following
instance of the cyclic sieving phenomenon on plane partitions, under an operation called toggles.

\begin{theorem}[See \cite{ShenWeng2018}]\label{thm:toggleCSP}
Let $\toggle$ act on $\PPset(a,b,n)$. Then
\begin{equation}
 \left(
 \PPset(a,b,n), \langle \toggle \rangle,  M_q(a,b,n)
 \right)
\end{equation}
is a CSP-triple.
\end{theorem}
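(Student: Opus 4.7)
The plan is to derive this cyclic sieving statement from the already-known CSP on rectangular SSYT under promotion (Rhoades' theorem), by exhibiting an equivariant bijection between the plane-partition side and the tableau side. The fact that $\partial^\dagger$ is, as the authors remark in the caption of \cref{tab:cspTable}, ``mapped in an equivariant manner to promotion on rectangular SSYT'' is exactly what I would aim to set up.

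First I would assemble the bijection. The map $\PPbij^{-1}:\PPset(a,b,n)\to\SHST(a,b,n)$ constructed earlier in the excerpt, followed by the trimming map that deletes the first row and subtracts $1$ from every remaining entry, gives a bijection $\Psi:\PPset(a,b,n)\to\SSYT(n^b,\leq a+b)$ onto rectangular tableaux of shape $n^b$ with entries in $\{1,\dots,a+b\}$. By \eqref{eq:chargeAsPPSum}, $|\pi|$ pulls back to a statistic equivalent to $\charge$ on $\SHST(a,b,n)$ up to an additive constant that is an integer multiple of $n$, and trimming preserves $\charge$ up to another such additive shift; so $q^{|\pi|}$ and the charge-generating polynomial of $\SSYT(n^b,\leq a+b)$ agree up to a factor $q^{cn}$, which is harmless at any $n^\thsup$ root of unity.

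The technical heart is step two: I would show $\Psi\circ\toggle=\partial\circ\Psi$, i.e.\ that the compound toggle on $\PPset(a,b,n)$ is intertwined with jeu-de-taquin promotion on $\SSYT(n^b,\leq a+b)$. Via the $\GT$-pattern picture, an elementary toggle at a box of the plane partition corresponds to incrementing/decrementing an interior entry of the GT-pattern, which in turn corresponds to a local move on the tableau. The total effect of the prescribed anti-diagonal sequence of toggles should then be matched, move by move, to the chain of jeu-de-taquin slides that constitutes one application of $\partial$. This kind of equivariance between rowmotion/toggle dynamics and promotion is precisely the theme of work of Striker--Williams and Rush--Shi, and I expect the present case to fall out as a specialization.

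Once equivariance is established, Rhoades' theorem supplies CSP for $(\SSYT(n^b,\leq a+b),\langle\partial\rangle,q^{\ast}\schurS_{n^b}(1,q,\dots,q^{a+b-1}))$. By the $q$-hook-content formula the principal specialization of the Schur polynomial coincides with $M_q(a,b,n)$ up to a power of $q$, and by the observation above this power is divisible by $n$, so it does not affect the values at $n^\thsup$ roots of unity. Transporting CSP along $\Psi$ then gives the claim. The main obstacle is clearly step two: producing a clean, bookkeeping-free proof that the specific toggle composition used by Shen--Weng really does correspond to promotion on the tableau side, rather than to some cousin operator such as evacuation or a power of promotion.
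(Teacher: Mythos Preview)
The paper does not prove \cref{thm:toggleCSP}: it is quoted from Shen--Weng as an external input and then combined with the equivariance from \cite{Hopkins2019} to obtain \cref{cor:cspOnStretchedHooks}. Your proposal runs the same machinery in the opposite direction---deduce the plane-partition CSP from Rhoades' rectangular CSP via the equivariant bijection---and that is a legitimate alternative derivation. The paper itself remarks, in the discussion following \cref{cor:cspOnStretchedHooks}, that Shen--Weng's argument uses ``completely different methods'' and that the promotion/toggle correspondence is made explicit in \cite{Hopkins2019} (and goes back to \cite{KirillovBerenstein1996}); so the equivariance you flag as the crux is already in the literature, and the paper invokes exactly that result (toggle corresponds to $\partial^{-1}$, which generates the same cyclic group as $\partial$) in its proof of \cref{cor:cspOnStretchedHooks}.

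Two things in your write-up need repair, however. First, the cyclic group $\langle\toggle\rangle$ has order $a+b$---this is the order of promotion on $\SSYT(n^b,\leq a+b)$, as recorded in \cref{cor:cspOnStretchedHooks}---not $n$. Your ``harmless at any $n^\thsup$ root of unity'' remarks are therefore aimed at the wrong target: a factor $q^{cn}$ is not in general trivial at an $(a+b)^\thsup$ root of unity. Second, the detour through charge is both unnecessary and shaky: charge is only defined when the content is a partition, so ``the charge-generating polynomial of $\SSYT(n^b,\leq a+b)$'' is not a well-formed object, and in any case Rhoades' CSP polynomial is the normalized principal specialization $q^{-\partitionN(n^b)}\schurS_{n^b}(1,q,\dotsc,q^{a+b-1})$, not a charge sum. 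The clean route is simply the hook-content formula (equivalently Stanley's identification of $M_q$ with a shifted principal specialization, stated in the paper just before \cref{thm:toggleCSP}), which gives
\[
q^{-\partitionN(n^b)}\schurS_{n^b}(1,q,\dotsc,q^{a+b-1})=M_q(a,b,n)
\]
on the nose, with no stray power of $q$ to control. With that identity and the equivariance from \cite{Hopkins2019}, the transport from Rhoades' theorem goes through cleanly.
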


Combining this with Corollary \ref{cor:KFasPP} we get our first main result.

\begin{corollary}[Cyclic sieving on stretched hook tableaux]\label{cor:cspOnStretchedHooks}
We have that $\partial$ act on $\SHST(a,b,n)$, with order $a+b$,
and the triple
\[
\left( \SHST(a,b,n), \langle \partial \rangle, q^{-n \binom{b}{2}} \SHST_q(a,b,n) \right)
\]
is a CSP-triple.
\end{corollary}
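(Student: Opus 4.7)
The plan is to transport the cyclic sieving phenomenon for plane partitions from \cref{thm:toggleCSP} back to stretched hook tableaux, using the bijection $\PPbij\colon \SHST(a,b,n)\to \PPset(a,b,n)$ constructed above.

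The key step is to establish the equivariance $\PPbij \circ \partial = \toggle \circ \PPbij$, where $\toggle$ is the composite toggle operation of \cite{ShenWeng2018}. By the earlier proposition on the order of promotion, $\partial$ acting on $T \in \SHST(a,b,n)$ is completely determined by its action on the $n^b$-rectangle obtained by deleting the first row and decrementing the entries by one, with alphabet $\{1,\dotsc,a+b\}$. I would first translate this rectangular promotion step into the partial-sum data that defines $\GT(T)$, and then restrict to the $a\times b$ subarray that constitutes $\PPbij(T)$, comparing the resulting modification of the plane partition with the toggle action from \cite{ShenWeng2018}. Verifying that each jeu-de-taquin slide translates to precisely the correct toggle along the appropriate diagonal is the main obstacle; once done, it also conceptually explains the rotation of the depth sequence under $\partial$ noted in the corollary to \cref{T:depth}.

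Once equivariance is in hand, for every $d\in\setZ$ we have
\[
|\{T \in \SHST(a,b,n) : \partial^d T = T\}| = |\{\pi \in \PPset(a,b,n) : \toggle^d \pi = \pi\}|,
\]
and by \cref{thm:toggleCSP} the right-hand side equals $M_q(a,b,n)$ evaluated at $q = \xi^d$, where $\xi$ is a primitive $(a+b)$-th root of unity. Combining this with \cref{cor:KFasPP}, which asserts $|\SHST(a,b,n)|_q = q^{n\binom{b}{2}} M_q(a,b,n)$, the polynomial $q^{-n\binom{b}{2}} |\SHST(a,b,n)|_q$ evaluated at $\xi^d$ produces the desired fixed-point count. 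The order-$(a+b)$ claim for promotion is immediate from the proposition preceding \cref{T:Rectang_charge}, completing the argument.
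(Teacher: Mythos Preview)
Your approach is essentially the same as the paper's: transport the CSP from \cref{thm:toggleCSP} through the bijection $\PPbij$, using \cref{cor:KFasPP} for the polynomial identity and the earlier proposition for the order of $\partial$. The one substantive difference is that where you propose to verify the equivariance $\PPbij\circ\partial=\toggle\circ\PPbij$ directly by tracking jeu-de-taquin slides through the GT-pattern, the paper instead observes that $\PPbij$ (restricted to the rectangular part) coincides with a bijection in \cite[Appendix~1]{Hopkins2019}, where the equivariance is already established. One minor caution: Hopkins shows that $\toggle$ corresponds to $\partial^{-1}$ rather than $\partial$, so the identity you would actually obtain is $\PPbij\circ\partial^{-1}=\toggle\circ\PPbij$; this is immaterial for the CSP conclusion since $\langle\partial\rangle=\langle\partial^{-1}\rangle$ and fixed-point counts under $\partial^d$ and $\partial^{-d}$ agree.
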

\begin{proof}
Note that the promotion action on any $T \in \SHST(a,b,n)$ is
determined by promotion on the rectangular shape obtained by
deleting the first row of $T$. Our bijection with plane partitions
matches the one described in \cite[Appendix 1]{Hopkins2019} where it
is shown that the toggle operation is mapped to $\partial^{-1}$.
The result follows as $q^{-n \binom{b}{2}} \SHST_q(a,b,n)=M_q(a,b,n)$.
\end{proof}
Note that the $n=1$ correspond to a special case of \cite{BennettMadillStokke2014}.

\subsection{Discussion and background}

The earliest reference connecting toggles and promotion is 
the article by A.~Kirillov and A.~Berenstein~\cite{KirillovBerenstein1996}.

Later in \cite{ShenWeng2018}, using completely different methods,
the authors prove that plane partitions in an $a\times b$-box with max size $n$
exhibit CSP under \emph{piecewise linear toggles}.
However, they do not mention the connection with promotion.

The connection between promotion, toggles and cyclic
sieving is made explicit in S.~Hopkins~\cite{Hopkins2019}, 
where he studies plane partitions with additional symmetry.
He also discusses the connection with \emph{promotion} 
and \emph{rowmotion} on posets considered in \cite{StrikerWilliams2012}.

\begin{question}
There is a promotion-type action on type $B$ minuscule poset ideals, see \cite{RushShi2012,Hopkins2020}.
This is the same as a type of toggle on symmetric plane partitions, and there are $2^n$
such plane partitions. Can this action also be realized as an action on SSYT?
\end{question}

\section{Promotion and cyclic sieving on skew shapes}

Let $\nu \vdash m$ be an integer partition, and let $\mdefin{\SM(\nu,n)}$
be the set of skew semistandard Young tableaux where 
the shape is a disjoint union of $\length(\nu)$
rows where row $j$ has length $n\nu_j$. The content is given by $n^m$,
so that $\SM(\nu,1)$ consists of skew standard Young tableaux.
Given a tableau $T \in \SM(\nu,n)$, we associate a matrix $M=M(T)$ via
\[
\mdefin{M_{ij}} = \text{number of entries in row $i$ of $T$ with value $j$}.
\]
By construction, 
\[
\sum_{j\geq 1} M_{ij} = n\nu_i \quad \text{ and } \quad \sum_{i\geq 1} M_{ij} = n,
\]
and we do in fact have a bijection between $\SM(\nu,n)$ and $\mdefin{\matrixSet(n\nu,n^{m})}$,
the set of non-negative integer matrices with row sums given by $n\nu$ and each column summing to $n$.
Given a matrix $M \in \matrixSet(n\nu,n^{m})$, we associate a biword $W$ where $\binom{i}{j}$
appears $M_{n+1-i,j}$ times and the entries in the biword are sorted lexicographically.
We then obtain a bijection between $\SM(\nu,n)$ and 
biwords of length $n|\nu|$ such that the content of the top 
row is $(n\nu_{\ell},\dotsc,n\nu_2,n\nu_1)$
and the content of the bottom row is $(n,n,\dotsc,n)$.
Moreover, if $T$ is mapped to $W=W(T)$ (via $M(T)$) then the 
reading word of $T$ is the bottom row of $W$.

\begin{example}
Here is a $T\in \SM(211,3)$, the corresponding matrix $M(T)$, and the biword $W(T)$.
\[
\ytableaushort{{\none}{\none}{\none}{\none}{\none}{\none}112334,{\none}{\none}{\none}134,224}
\qquad
\begin{pmatrix}
2 & 1 & 2 & 1 \\
1 & 0 & 1 & 1 \\
0 & 2 & 0 & 1 
\end{pmatrix}
\]
\[
\setcounter{MaxMatrixCols}{20}
\begin{pmatrix}
1&1&1 &2&2&2 &3&3&3&3&3&3 \\
2&2&4 &1&3&4 &1&1&2&3&3&4 
\end{pmatrix}
\]
\end{example}

\begin{proposition}\label{prop:rskOnSkewStuffAndCharge}
Let $\nu \vdash m$ and $n\geq 1$. 
The Robinson--Schensted--Knuth correspondence (RSK) gives a bijection
\begin{align*}
\SM(\nu,n) &\rskArrow 
\bigcup_{\lambda \vdash nm } \SSYT(\lambda,n^m) \times \SSYT(\lambda,n\overline{\nu}) \\
T &\rskArrow (P,Q).
\end{align*} 
where the insertion algorithm is performed on the biword associated with $T$,
and $\overline{\nu}$ denotes the reverse of $\nu$.
Furthermore, $\charge(T)=\charge(P)$.
\end{proposition}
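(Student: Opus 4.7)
The plan is to handle this in two stages: first unpack the claimed bijection as a direct application of classical RSK to the biword $W(T)$, and then deduce the charge identity from the Lascoux--Schützenberger theorem on charge invariance under Knuth equivalence.

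For the bijection, I would first observe that $T \mapsto M(T)$ is a bijection $\SM(\nu,n) \to \matrixSet(n\nu,n^m)$ by construction, and that $M \mapsto W(M)$ is a bijection between such matrices and lexicographically sorted biwords whose top row has content $(n\nu_\ell,\dotsc,n\nu_2,n\nu_1)$ and whose bottom row has content $n^m$. Classical RSK on biwords then gives a bijection $W \mapsto (P,Q)$ where $P,Q$ have a common shape $\lambda \vdash nm$, with $P$ having content equal to that of the bottom row of $W$, i.e.\ $n^m$, and $Q$ having content equal to that of the top row. Because the multiplicities in $W(T)$ use the reversal $M_{n+1-i,j}$, the number of $i$'s in the top row of $W(T)$ equals $n\nu_{\ell+1-i} = n\overline{\nu}_i$, giving $Q \in \SSYT(\lambda,n\overline{\nu})$ as claimed. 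Composing the three bijections yields the stated bijection.

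For the charge equality, the key observation (already recorded in the excerpt) is that the bottom row of $W(T)$ coincides with the reading word $\rw(T)$. Consequently, the $P$-tableau produced by RSK on $W(T)$ is precisely the row-insertion tableau of $\rw(T)$. The Lascoux--Schützenberger theorem states that charge is invariant under elementary Knuth transformations, and hence $\charge(w) = \charge(P(w))$ for any word $w$ with partition content. Applying this to $w = \rw(T)$ gives $\charge(T) = \charge(\rw(T)) = \charge(P)$.

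The only nontrivial checking is to confirm that the ordering conventions (lexicographic sorting of $W$, together with the reversal $n+1-i$ in the top index) line up so that the bottom row of $W(T)$ is literally $\rw(T)$ read left to right; this is a direct verification from the definitions of $M(T)$ and of the reading word. Once this is in place, the rest is a standard invocation of RSK plus Lascoux--Schützenberger, so I do not expect a real obstacle beyond this bookkeeping.
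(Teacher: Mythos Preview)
Your proposal is correct and follows essentially the same approach as the paper: identify the bottom row of $W(T)$ with $\rw(T)$, then use that RSK insertion preserves Knuth class together with the Lascoux--Sch\"utzenberger (equivalently, Butler) result that charge is constant on Knuth classes. If anything, your write-up is more complete, since you also spell out the bijection itself, whereas the paper's proof only addresses the charge identity and takes the RSK bijection for granted.
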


\begin{proof}
Let $w$ be the bottom row of the biword associated with $T$, i.e., $w = \rw(T)$.
By definition, $\charge(T) = \charge(w)$.
Moreover, $w$ insert to the semistandard tableau $P$ under RSK,
so $w$ and $\rw(P)$ are Knuth-equivalent.
It is a property of charge that if $w_1$ and $w_2$ are 
Knuth-equivalent, then $\charge(w_1)=\charge(w_2)$, see e.g. \cite{Butler1994}.
This last property implies that $\charge(T)=\charge(P)$.
\end{proof}

We let the $q$-analogue of $|\SM(\nu,n)|$ be defined as
\begin{equation}\label{eq:qSM}
\mdefin{|\SM(\nu,n)|_q} \coloneqq \sum_{T \in \SM(\nu,n)} q^{\charge(T)}.
\end{equation}
By definition, $|\SM(\nu,n)|_q$ is the skew Kostka--Foulkes coefficient $K_{\lambda/\mu,n^m}(q)$,
where $\lambda/\mu$ is the skew shape defined by the disjoint 
rows of lengths given by the parts of $n\nu$.

\begin{corollary}\label{cor:skewCharge}
	Let $\nu \vdash m$ and $n\geq 1$. Then 
	\begin{equation}
	|\SM(\nu,n)|_q = \sum_{\lambda \vdash mn} K_{\lambda,n^m}(q) K_{\lambda,n\nu}(1)
	\end{equation}
\end{corollary}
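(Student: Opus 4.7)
The plan is to apply the RSK bijection from \cref{prop:rskOnSkewStuffAndCharge} and then factor the resulting sum. Starting from the definition in \eqref{eq:qSM}, I would first partition $\SM(\nu,n)$ according to the shape $\lambda \vdash mn$ of the pair $(P,Q)$ obtained via RSK. Since RSK is a bijection onto $\bigcup_{\lambda} \SSYT(\lambda,n^m) \times \SSYT(\lambda, n\overline{\nu})$, and since $\charge(T)=\charge(P)$ depends only on the insertion tableau, the sum factors as
\begin{equation*}
|\SM(\nu,n)|_q = \sum_{\lambda \vdash mn} \Bigl(\sum_{P \in \SSYT(\lambda,n^m)} q^{\charge(P)}\Bigr) \Bigl(\sum_{Q \in \SSYT(\lambda,n\overline{\nu})} 1\Bigr) = \sum_{\lambda \vdash mn} K_{\lambda,n^m}(q) \, K_{\lambda,n\overline{\nu}}(1).
\end{equation*}

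To finish, I would invoke the invariance of Kostka--Foulkes polynomials under permutation of the content partition (stated in the preliminaries as $K_{\lambda/\mu, \sigma(\nu)} = K_{\lambda/\mu, \nu}$), specialized at $q=1$, to replace $K_{\lambda,n\overline{\nu}}(1)$ by $K_{\lambda,n\nu}(1)$. There is essentially no obstacle here: the only subtle point is ensuring the correct use of the two defining properties of charge, namely that it equals $\charge(\rw(T))$ and that it is constant on Knuth classes, both of which are already handled in \cref{prop:rskOnSkewStuffAndCharge}. The statement then follows immediately.
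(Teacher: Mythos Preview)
Your proof is correct and follows exactly the same approach as the paper: apply \cref{prop:rskOnSkewStuffAndCharge} to factor the generating function via RSK, then use the invariance $K_{\lambda,\overline{\nu}}=K_{\lambda,\nu}$ to pass from $n\overline{\nu}$ to $n\nu$. You have simply spelled out the details that the paper leaves implicit.
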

\begin{proof}
The identity follows from \cref{prop:rskOnSkewStuffAndCharge},
and by using the fact that $K_{\lambda\nu}=K_{\lambda\overline{\nu}}$.
\end{proof}

Note that $K_{\lambda,\nu}(1)=K_{\lambda,\nu}$, the Kostka coefficients defined previously.
\begin{theorem}[Cyclic sieving on stretched skew shapes with disjoint rows]\label{thm:CSPonStretchedRibbons}
Let $\nu \vdash m$ and $n\geq 1$. Set 
$\lambda/\mu$ to be the skew shape defined by the disjoint 
rows of lengths given by the parts of $n\nu$.
Then 
\[
\left( \SM(\nu,n), \langle \partial \rangle, K_{\lambda/\mu,n^m}(q)  \right)
\]
is a CSP-triple.
\end{theorem}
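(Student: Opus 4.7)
My plan is to reduce the statement to Rhoades' cyclic sieving phenomenon for integer matrices \cite{Rhoades2010b}, using the bijection $T \mapsto M(T)$ between $\SM(\nu,n)$ and $\matrixSet(n\nu,n^m)$ introduced earlier. The key step is to show that, under this bijection, jeu-de-taquin promotion on $\SM(\nu,n)$ corresponds to the cyclic shift
\[
\rot \colon (M_{\cdot,1}, M_{\cdot,2}, \dotsc, M_{\cdot,m}) \longmapsto (M_{\cdot,2}, \dotsc, M_{\cdot,m}, M_{\cdot,1})
\]
of the columns.

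To establish this correspondence, I would observe that $\lambda/\mu$ is a \emph{disjoint} union of rows, so no two cells of $T$ share a column. Consequently no hole created during promotion has a neighbour below it, every jeu-de-taquin slide is confined to the row in which it started, and promotion acts independently on each row. In row $i$, the $M_{i,1}$ leading cells labelled $1$ become holes that slide to the right end of the row; after decrementing the remaining entries by one and relabelling the holes as $m$, the content of row $i$ undergoes the cyclic shift $(M_{i,1}, M_{i,2}, \dotsc, M_{i,m}) \mapsto (M_{i,2}, \dotsc, M_{i,m}, M_{i,1})$. Performing this simultaneously in every row is exactly $\rot$ on the matrix.

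Granted the identification, \cite{Rhoades2010b} asserts that
\[
\bigl( \matrixSet(n\nu,n^m),\; \langle \rot \rangle,\; \sum_{\lambda \vdash mn} K_{\lambda,n^m}(q)\, K_{\lambda,n\nu}(1) \bigr)
\]
is a CSP-triple, and by \cref{cor:skewCharge} the polynomial on the right coincides with $K_{\lambda/\mu,n^m}(q)$, finishing the proof. The main obstacle is the first step: one must verify carefully that jeu-de-taquin promotion on a disjoint-row skew tableau really does decouple row by row and enact the described cyclic shift of row contents. Once that decoupling is in hand the result is a clean translation of Rhoades' matrix CSP.
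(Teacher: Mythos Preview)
Your proposal is correct and follows essentially the same route as the paper: identify $\SM(\nu,n)$ with $\matrixSet(n\nu,n^m)$ via $T\mapsto M(T)$, check that promotion becomes cyclic left rotation of columns (which the paper states in one line and you justify in slightly more detail using the disjointness of the rows), invoke Rhoades' CSP for matrices, and finish with \cref{cor:skewCharge}. The ``main obstacle'' you flag is not really an obstacle---since no two cells share a column, the row-by-row decoupling of jeu-de-taquin is immediate, as you yourself observe.
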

\begin{proof}
We first note that $\partial$ acting on $T\in \SM(\nu,n)$ simply corresponds to 
cyclic rotation of the columns one step left in $M(T)$. 
Hence, elements in $\SM(\nu,n)$ fixed under $\partial^d$ are in bijection with 
matrices in $\matrixSet(n\nu,n^{m})$ which are 
fixed under cyclic rotation of columns $j$ steps.
A result by B.~Rhoades \cite[Thm. 1.3]{Rhoades2010b}, with a suitable specialization, 
implies that
\[
\left( M(n\nu,n^{m}), \langle \text{cyclic rotation of columns} \rangle,
\sum_{\lambda \vdash mn} K_{\lambda,n^m}(q) K_{\lambda,n\nu}(1) \right)
\]
is a triple which exhibits the cyclic sieving phenomenon.
Together with \cref{cor:skewCharge}, the theorem follows.
\end{proof}

\subsection{Disjoint union of rectangles}

We shall now extend \cref{thm:CSPonStretchedRibbons} to a much
larger class of skew shapes and contents, where each
disjoint shape is now a rectangle (and not just a row), and arbitrary contents.

In \cite{Rhoades2010}, B.~Rhoades proved that promotion 
acting on rectangular semistandard Young tableaux of shape $a^b$, and entries bounded by $m$,
gives an instance of cyclic sieving together with evaluating $q^\ast\schurS_{a^b}(1,q,\dotsc,q^{m-1})$.
This was later refined in \cite{FontaineKamnitzer2013}, where the content of 
the semistandard tableaux was held fixed, and the CSP-polynomial is essentially given by 
a Kostka--Foulkes polynomial.

In this subsection, we extend their result slightly, to skew shapes consisting of 
a disjoint union of rectangles. First some additional notation.
We let $\mdefin{\mu^k}$ denote the partition (composition) obtained from $\mu$,
where each part has been repeated $k$ times,
so that if $\mu = 1^{a_1} 2^{a_2} \dotsb \ell^{a_\ell}$ then
$\mdefin{\mu^k} \coloneqq 1^{k\cdot a_1} 2^{k\cdot a_2} \dotsb \ell^{k\cdot a_\ell}$.
For compositions, we concatenate the parts, e.g. $125^3 = 125 125 125$.

A skew shape with $k$ boxes is a \defin{$k$-ribbon} if it is connected and does not 
contain a $2\times 2$-arrangement of boxes. 
The \defin{head} of a ribbon is the upper right-most box.
A collection of $k$-ribbons form a \defin{horizontal strip} if
their union is a skew shape and their heads lie in different columns.
A \defin{semistandard $k$-ribbon tableau} of shape $\lambda/\mu$ and
content $\nu$ is a sequence of skew shifted diagrams
$\lambda_1/\mu \subset \lambda_2/\mu \subset \cdots \subset \lambda_n/\mu=\lambda/\mu$ where $\lambda_1/\mu$
is a horizontal ribbon strip containing $\nu_1$ $k$-ribbons
and each $\lambda_{i+1}/\lambda_{i+1}$ is a horizontal
$k$-ribbon strip containing $\nu_i$ $k$-ribbons.

We let $\mdefin{K^{(k)}_{\lambda/\mu,\nu}}$ be the number of
\defin{semistandard $k$-ribbon tableaux} of shape $\lambda/\mu$ and content $\nu$.
Note that for $k=1$, we recover the usual Kostka coefficient $K_{\lambda/\mu,\nu}$.
Finally, if $\lambda/\mu$ is a ribbon with $k$ boxes, 
we let $\mdefin{\varepsilon_k(\lambda/\mu)} \coloneqq (-1)^{h-1}$
where $h$ is the number of rows covered by $\lambda/\mu$.
For arbitrary skew shapes $\lambda/\mu$, we let 
\[
\mdefin{\varepsilon_k(\lambda/\mu)} \coloneqq \prod_j 
\varepsilon_k(\lambda^{(j)}/\mu^{(j)})
\]
where $\lambda^{(1)}/\mu^{(1)}$, 
$\lambda^{(2)}/\mu^{(2)},\dotsc$ is any partitioning of $\lambda/\mu$
with $k$-ribbons (the sign can be shown to be independent of the particular choice of partitioning).
We let $\varepsilon_k(\lambda/\mu)$ be $0$ if no such partitioning exists.
\begin{example}\label{ex:kRibbonTab}
The $3$-ribbon tableaux counted by $K^{(3)}_{4422,1111}$
are the following six fillings,
\[ 
\includegraphics[scale=0.8]{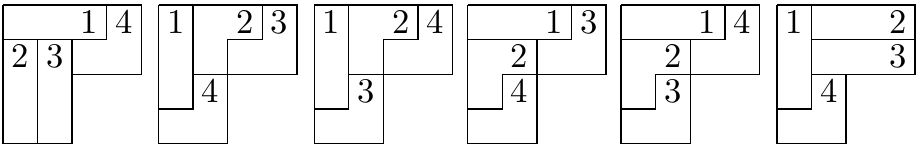} 
\]
and the $3$-ribbon tableaux counted by $K^{(3)}_{4422,211}$
are given by
\[ 
\includegraphics[scale=0.8]{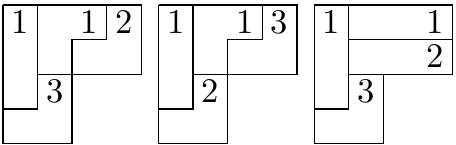}.
\]
It is straightforward to verify that $\varepsilon_3(4422)=-1$, for example,
as the last semistandard $3$-ribbon tableau gives a partitioning with four $3$-ribbons, 
with the signs $(-1)^{3-1}$, $(-1)^{1-1}$, $(-1)^{1-1}$, and $(-1)^{2-1}$, respectively.
\end{example}
It can be shown (see e.g., \cite{AlexanderssonPfannererRubeyUhlin2020x}) that $\varepsilon_k(\lambda/\mu) $ can be given in terms of a skew character, but we do not need that here.
%\[
 %\varepsilon_k(\lambda/\mu) = \sign\left( \chi^{\lambda/\mu}(k^d) \right)
%\]
%where $\chi^{\lambda/\mu}$ is a skew character, and $\lambda/\mu$ is a skew shape 
%of size $kd$.

\begin{theorem}[{See \cite[p.29]{DesarmenienLeclercThibon1994}}]\label{thm:skewKostkaAtRootsOfUnity}
Let $\lambda/\mu$ be a skew shape and $\nu$ a weak composition.
Let $\xi$ be a primitive $j^\thsup$ root of unity.
Then
 \[
  K^{(j)}_{\lambda/\mu,\nu} =  (-1)^{|\nu|(j-1)}\varepsilon_j(\lambda/\mu) K_{\lambda/\mu,\nu^j}(\xi).
 \]
\end{theorem}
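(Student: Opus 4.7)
My plan is to work inside the ring of symmetric functions and follow the LLT strategy: reduce the theorem to a single plethystic identity for modified Hall--Littlewood polynomials at roots of unity, then apply Littlewood's classical ribbon expansion. I would introduce the basis
\[
Q'_\nu(x;q) \coloneqq \sum_\lambda K_{\lambda,\nu}(q)\,\schurS_\lambda(x),
\]
and extend to skew shapes by $K_{\lambda/\mu,\nu}(q) \coloneqq \sum_\rho c^\lambda_{\mu\rho}\,K_{\rho,\nu}(q)$. Because both sides of the claim are linear in $\lambda/\mu$ through this LR expansion, the skew statement reduces to the straight-shape case together with the compatibility $\varepsilon_j(\lambda/\mu)=\varepsilon_j(\lambda)\,\varepsilon_j(\mu)$, obtained by concatenating any $j$-ribbon tiling of $\mu$ with one of $\lambda/\mu$ to get a tiling of $\lambda$.

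The core technical input is the cyclotomic specialization
\[
Q'_{\nu^j}(x;\xi) \;=\; (-1)^{|\nu|(j-1)}\,h_\nu[p_j](x),
\]
where $[\,\cdot\,]$ denotes plethystic substitution and $p_j$ is the $j$-th power sum. I would derive this from a generating-function or determinantal presentation of $Q'_{\nu^j}$, evaluating at $q=\xi$ and collapsing the resulting cyclotomic products via $\prod_{i=0}^{j-1}(1-\xi^i t)=1-t^j$; the global sign $(-1)^{|\nu|(j-1)}$ emerges from the $q$-Pochhammer prefactors at $q=\xi$. As a sanity check, for $\nu=(1)$, $j=2$ the identity predicts $Q'_{(1,1)}(x;-1)=-p_2=-\schurS_{(2)}+\schurS_{(1,1)}$, and for $\nu=(2)$, $j=2$ it gives $Q'_{(2,2)}(x;-1)=\schurS_{(4)}-\schurS_{(3,1)}+\schurS_{(2,2)}=h_2[p_2]$; both match a direct charge computation of the modified Hall--Littlewood polynomial.

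I would then apply Littlewood's ribbon expansion
\[
h_\nu[p_j](x) \;=\; \sum_\eta \varepsilon_j(\eta)\,K^{(j)}_{\eta,\nu}\,\schurS_\eta(x),
\]
a multi-row consequence of the Murnaghan--Nakayama rule: expand $h_\nu=\prod_i h_{\nu_i}$, apply the $j$-ribbon expansion of $p_j$ factor by factor, and keep track of the horizontal-strip condition (no column carries boxes from two distinct ribbons) to recover precisely the semistandard $j$-ribbon count $K^{(j)}_{\eta,\nu}$ with the height sign $\varepsilon_j(\eta)$. Matching coefficients of $\schurS_\rho$ with the expansion from the previous paragraph yields the straight-shape identity $K_{\rho,\nu^j}(\xi)=(-1)^{|\nu|(j-1)}\,\varepsilon_j(\rho)\,K^{(j)}_{\rho,\nu}$, and the skew version follows from the LR reduction above.

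The main obstacle is the cyclotomic plethystic identity in the second paragraph. It is really the heart of LLT theory and the combinatorial shadow of the level-one Fock representation of $U_\xi(\widehat{\mathfrak{sl}}_j)$, in which $Q'_{\nu^j}$ at $q=\xi$ factors through the principal Heisenberg subalgebra generated by $p_j$. An elementary derivation is possible but requires delicate manipulation of $q$-Pochhammer symbols at primitive roots of unity; reconciling both the global sign $(-1)^{|\nu|(j-1)}$ coming from the $q$-side and the ribbon-height sign $\varepsilon_j(\eta)$ coming from the combinatorial side is the main technical difficulty, and the reason Desarm\'enien--Leclerc--Thibon devote substantial effort to this single identity.
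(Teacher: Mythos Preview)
The paper does not supply its own proof of this statement; it is quoted from D\'esarm\'enien--Leclerc--Thibon and used as a black box. Your outline is essentially the argument from that reference---working through the modified Hall--Littlewood functions $Q'_\nu$, the plethystic specialization $Q'_{\nu^j}(x;\xi)=(-1)^{|\nu|(j-1)}h_\nu[p_j]$, and Littlewood's ribbon expansion---and the core identities you invoke are correct, as your sanity checks confirm.

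There is, however, a gap in your reduction to straight shapes. You assert that ``both sides of the claim are linear in $\lambda/\mu$ through this LR expansion'', but while $K_{\lambda/\mu,\nu^j}(q)=\sum_\rho c^\lambda_{\mu\rho}K_{\rho,\nu^j}(q)$ is immediate, the analogous statement on the ribbon side,
\[
\varepsilon_j(\lambda/\mu)\,K^{(j)}_{\lambda/\mu,\nu}=\sum_\rho c^\lambda_{\mu\rho}\,\varepsilon_j(\rho)\,K^{(j)}_{\rho,\nu},
\]
is \emph{not} a formal consequence of the sign factorization $\varepsilon_j(\lambda/\mu)=\varepsilon_j(\lambda)\varepsilon_j(\mu)$ alone; it is itself a nontrivial identity about ribbon tableaux, essentially equivalent to what you are trying to prove. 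The cleaner route is to bypass Littlewood--Richardson entirely: once you have $Q'_{\nu^j}(x;\xi)=(-1)^{|\nu|(j-1)}h_\nu[p_j]$, pair both sides directly against the skew Schur function $\schurS_{\lambda/\mu}$. The left pairing is $K_{\lambda/\mu,\nu^j}(\xi)$ by definition, and on the right the iterated \emph{skew} Murnaghan--Nakayama rule (applied to each factor $h_{\nu_i}[p_j]$) yields $\langle \schurS_{\lambda/\mu},h_\nu[p_j]\rangle=\varepsilon_j(\lambda/\mu)\,K^{(j)}_{\lambda/\mu,\nu}$ in one step, with no detour through straight shapes.
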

For example, $K_{4422,2^31^31^3}(q)$ is given by
\begin{align*}
 &q^{25}+q^{24}+4 q^{23}+5 q^{22}+10 q^{21}+13 q^{20}+21 q^{19}+ \\
 &24 q^{18}+33 q^{17}+34 q^{16}+39 q^{15}+36 q^{14}+36 q^{13}+ \\
 &27 q^{12}+23 q^{11}+14 q^{10}+9 q^9+4 q^8+2 q^7
\end{align*}
and it evaluates to $-3$ at $q=e^{2\pi i/3}$. This is in agreement with 
the fact that $K^{(3)}_{4422,211} = 3$, as we saw in \cref{ex:kRibbonTab}. Note that basic properties of Kostka-Foulkes polynomials allows to reorder the composition giving the content, in this case 
$K_{4422,2^31^31^3}(q)=K_{4422,211211211}(q)$ .

We now recall the main result in \cite{FontaineKamnitzer2013}.
\begin{theorem}[Cyclic sieving on rectangular SSYT, fixed content]\label{thm:oneRectangleSSYTCSP}
 Let $(\gamma_1,\dotsc,\gamma_m)$ be a sequence of non-negative integers 
 with sum $ab$. Suppose $\rot_m^d(\gamma) = \gamma$ for some $d \mid m$.
 Then 
 \[
  \left( \SSYT(a^b,\gamma), \langle \partial^d \rangle, q^{\frac{1}{2}(a^2b - (\gamma_1^2+\dotsb+\gamma_m^2))}
  K_{a^b,\gamma}(q) \right)
 \]
is a CSP-triple. Note that $\langle \partial^d \rangle$ generates a cyclic group of size $m/d$.
\end{theorem}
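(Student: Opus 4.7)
The plan is to establish the CSP by evaluating $f(q) = q^{\frac{1}{2}(a^2b - \sum_i\gamma_i^2)} K_{a^b,\gamma}(q)$ at arbitrary powers of $\xi$ and matching each value with the corresponding fixed-point count of $\partial^d$ on $\SSYT(a^b,\gamma)$. The main tool is the Desarmenien--Leclerc--Thibon identity (\cref{thm:skewKostkaAtRootsOfUnity}) which rewrites a Kostka--Foulkes polynomial at a root of unity as a signed count of ribbon tableaux.

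First I would note that the hypothesis $\rot_m^d(\gamma) = \gamma$ implies $\partial^d$ preserves $\SSYT(a^b,\gamma)$, since a single promotion step shifts content cyclically by one position. For $e \in \setZ$, set $g \coloneqq \gcd(e, m/d)$ and $j \coloneqq (m/d)/g$, so that $\xi^e$ is a primitive $j$-th root of unity. Since $\gamma$ has period $d$, I may write $\gamma = \tilde\gamma_0^{\,j}$ with $\tilde\gamma_0 \coloneqq (\gamma_1,\dotsc,\gamma_{dg})$, and applying \cref{thm:skewKostkaAtRootsOfUnity} yields
\[
K_{a^b,\gamma}(\xi^e) \;=\; (-1)^{|\tilde\gamma_0|(j-1)}\,\varepsilon_j(a^b)\, K^{(j)}_{a^b,\,\tilde\gamma_0}.
\]
A direct check, using $\sum_i \gamma_i = ab$ and the parity of the abacus representation of $a^b$, would confirm that the root-of-unity prefactor $\xi^{e\cdot\frac{1}{2}(a^2b - \sum_i \gamma_i^2)}$ cancels the sign $(-1)^{|\tilde\gamma_0|(j-1)}\varepsilon_j(a^b)$ exactly, so that $f(\xi^e)$ equals the nonnegative integer $K^{(j)}_{a^b,\tilde\gamma_0}$. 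When no $j$-ribbon tiling of $a^b$ exists, both sides vanish (since $\varepsilon_j(a^b) = 0$) and the claim reduces to the absence of fixed points.

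It then suffices to construct an equivariant bijection
\[
\bigl\{\, T \in \SSYT(a^b,\gamma) : \partial^{de}(T) = T \,\bigr\}
\;\longleftrightarrow\;
\bigl\{\,j\text{-ribbon tableaux of shape } a^b \text{ with content } \tilde\gamma_0\,\bigr\}.
\]
My proposal is to use the Stanton--White correspondence, which identifies an SSYT of shape $\lambda$ with a pair consisting of the $j$-core of $\lambda$ and a $j$-tuple (the $j$-quotient) of smaller SSYT. Under this correspondence, promotion on a rectangle should act on the quotient by cyclically permuting the $j$ component tableaux together with an internal content rotation. Consequently, $\partial^{de}$-fixed tableaux correspond exactly to $j$-tuples that agree after the appropriate cyclic shift, and these collapse to a single $j$-ribbon tableau with content $\tilde\gamma_0$ on the shape $a^b$.

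The main obstacle is proving the equivariance of the Stanton--White bijection under promotion: the interaction between jeu-de-taquin slides (which define $\partial$) and abacus or $j$-quotient moves is delicate, and this is where the real content of the theorem lies. For rectangular shapes the $j$-core and $j$-quotient have an especially clean form, and I expect the equivariance can be extracted by combining Rhoades' analysis of promotion on rectangular SSYT with the Lascoux--Leclerc--Thibon theory of ribbon insertion and the affine symmetric group action. Once this equivariance is in place, the bijection above produces $K^{(j)}_{a^b,\tilde\gamma_0}$ as the fixed-point count, which matches $f(\xi^e)$ by the computation of the previous paragraph, completing the proof.
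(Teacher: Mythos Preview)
The paper does not prove this theorem at all: it is quoted verbatim as ``the main result in \cite{FontaineKamnitzer2013}'' and used as a black box (in particular, to derive the sign identity \eqref{eq:signRelation} and \cref{cor:signCorrection}). So there is no in-paper proof to compare against; Fontaine--Kamnitzer's own argument is geometric, going through the Satake correspondence and tensor-product multiplicities, not through ribbon tableaux or Stanton--White.

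Regarding your proposal itself, there are two genuine gaps. First, the sign cancellation you assert---that $\xi^{e\cdot\frac12(a^2b-\sum_i\gamma_i^2)}$ equals $(-1)^{|\tilde\gamma_0|(j-1)}\varepsilon_j(a^b)$---is exactly the content of \eqref{eq:signRelation} in the paper, and there it is \emph{deduced from} \cref{thm:oneRectangleSSYTCSP}, not proved independently. Your ``direct check'' via the abacus is plausible but is not carried out, and it is not obvious. Second, and more seriously, you correctly identify that the crux is showing promotion on $\SSYT(a^b,\gamma)$ intertwines with cyclic rotation of the $j$-quotient under Stanton--White, but you do not prove it; you only ``expect'' it follows from Rhoades plus LLT ribbon insertion. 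That equivariance is precisely the hard part, and neither Rhoades' Kazhdan--Lusztig argument nor the LLT framework gives it for free. Without it, your bijection between $\partial^{de}$-fixed tableaux and $j$-ribbon tableaux is unjustified, and the argument does not close.
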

Note that $\rot_m^{d}(\gamma) = \gamma$ implies that $\gamma$
is the concatenation of $m/d$ copies of some composition with $d$ parts.
Let  $\mu \coloneqq  (\gamma_1,\dotsc,\gamma_{jd})$ and with a light abuse of notation we will also write $\mu= \gamma^{jd/m}$.
For example, $\gamma = 12121212$, $d=2$, $j=2$ gives $\mu=1212$.

Combining this theorem with \cref{thm:skewKostkaAtRootsOfUnity}, 
we have that for fixed $a^b$, $d \geq 1$ and $\gamma = (\gamma_1,\dotsc,\gamma_m)$ and any $j \mid \frac{m}{d}$,
that
\begin{equation}\label{eq:fixedPts}
 |\{ T \in \SSYT(a^b,\gamma) \} :   \partial^{jd}(T) = T \}| = 
 \begin{cases}
 K^{(m/(jd))}_{a^b,\gamma^{jd/m}}  &\text{if } \rot_m^{d}(\gamma) = \gamma \\
 0 &\text{otherwise}.
 \end{cases}
 \end{equation}

Let $\ell= \frac{m}{jd}$ and $\xi$ be a primitive $\ell^\thsup$ root of unity.
From \eqref{eq:fixedPts} and
\cref{thm:oneRectangleSSYTCSP} (with the assumption that 
$\rot_m^{d}(\gamma) = \gamma$) we have due to the CSP that
\begin{equation}
K^{(\ell)}_{a^b,\gamma^{1/\ell}} =
  \xi^{\frac{1}{2}(a^2b - (\gamma_1^2+\dotsb+\gamma_m^2))}
  K_{a^b,\gamma}(\xi).
\end{equation}
Now, by \cref{thm:skewKostkaAtRootsOfUnity}, 
$K^{(\ell)}_{a^b,\gamma^{1/\ell}} = \varepsilon_\ell(a^{b}) (-1)^{|\gamma^{1/\ell}|(\ell-1)} K_{a^b,\gamma}(\xi)$,
so (unless both sides vanish) 
\begin{align}\label{eq:signRelation}
 (-1)^{\frac{ab}{\ell}(\ell-1)}
 \varepsilon_\ell(a^b) 
 &=  \xi^{\frac{1}{2}(a^2b - (\gamma^2_1 + \dotsb + \gamma_m^2))}.
\end{align}
Note that we must have $\ell \mid ab$ in order for $K^{(\ell)}_{a^b,\gamma^{1/\ell}}$
to be non-zero, so $(-1)^{\frac{ab}{\ell}(\ell-1)}$ is either $-1$ or $1$.
Moreover, $\gamma^2_1 + \dotsb + \gamma_m^2=\ell(\gamma^2_1+ \dotsb + \gamma_{jd}^2)$,
so there is a factor of $\ell/2$ in the exponent of $\xi$.

Intuitively, we can then think that
%\begin{align*}
$\xi^{\gamma^2_1 + \dotsb + \gamma_m^2} 
 = (-1)^{\gamma^2_1 + \dotsb + \gamma_{jd}^2} 
 = (-1)^{\gamma_1 + \dotsb + \gamma_{jd}} 
 = (-1)^{\frac{ab}{\ell}}$.
%\end{align*}
Hence, the appearance of $\gamma$ in \eqref{eq:signRelation}
is only used to define a ``nice'' exponent, where the dependence on $\ell$
is encapsulated in $\xi$.
\medskip 

\begin{corollary}\label{cor:signCorrection}
Given positive integers $a$, $b$ and $M$, where $M\mid ab$, there is an integer $E = E(a,b,M)>0$, such that
for all $\ell \mid M$ with $\varepsilon_\ell(a^b) \neq 0$, we have
\begin{align}\label{eq:signRelation2}
 (-1)^{\frac{ab}{\ell}(\ell-1)}
 \varepsilon_\ell(a^b) 
 &=  \xi^{E},
\end{align}
where $\xi$ is a primitive $\ell^\thsup$ root of unity.
\end{corollary}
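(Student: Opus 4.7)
My plan is to apply \eqref{eq:signRelation} to a single, carefully chosen composition $\gamma$ that is universal across every $\ell \mid M$. The natural choice is $\gamma = 1^{ab}$, the all-ones composition of length $m = ab$: it is fixed by $\rot_{ab}^1$ (so the hypothesis of \cref{thm:oneRectangleSSYTCSP} holds with $d = 1$), and as $j$ ranges over the divisors of $ab$, the exponent $\ell = ab/j$ ranges over all divisors of $ab$; since $M \mid ab$ by hypothesis, every $\ell \mid M$ appears.

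The principal obstacle is that the derivation of \eqref{eq:signRelation} tacitly divides by $K_{a^b,\gamma}(\xi)$, which must therefore be nonzero. I plan to dispatch this via \cref{thm:skewKostkaAtRootsOfUnity}: for $\gamma = 1^{ab}$ one has
\[
K^{(\ell)}_{a^b,\, 1^{ab/\ell}} \;=\; (-1)^{(ab/\ell)(\ell-1)}\,\varepsilon_\ell(a^b)\, K_{a^b,\, 1^{ab}}(\xi),
\]
and the left-hand side counts standard $\ell$-ribbon tableaux of shape $a^b$ (each ribbon gets its own color, so the horizontal-strip condition is vacuous). This count is positive precisely when $a^b$ admits \emph{some} $\ell$-ribbon tiling, i.e., when $\varepsilon_\ell(a^b) \neq 0$. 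Hence, under the hypothesis of the corollary, $K_{a^b,1^{ab}}(\xi) \neq 0$ and the sign relation \eqref{eq:signRelation} actually applies.

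Plugging $\gamma = 1^{ab}$ into the exponent gives $\sum_i \gamma_i^2 = ab$, so $\tfrac{1}{2}(a^2 b - \sum \gamma_i^2) = ab(a-1)/2$, a non-negative integer (one of $a$, $a-1$ is even). The relation reads
\[
(-1)^{ab(\ell-1)/\ell}\,\varepsilon_\ell(a^b) \;=\; \xi^{ab(a-1)/2}
\]
for every $\ell \mid M$ with $\varepsilon_\ell(a^b) \neq 0$. To enforce strict positivity I set $E \coloneqq ab(a-1)/2 + M$, which is positive, and observe $\xi^E = \xi^{ab(a-1)/2}$ because $\xi^M = 1$ whenever $\ell \mid M$. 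This yields the desired $E = E(a,b,M)$, with the only substantive step being the non-vanishing verification in the previous paragraph.
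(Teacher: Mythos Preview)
Your proof is correct and follows the same strategy as the paper's own proof: pick $m$, $d$ and a composition $\gamma$ with $\rot_m^d(\gamma)=\gamma$ and $\sum\gamma_i=ab$, then read off $E$ from \eqref{eq:signRelation}. The paper's argument is a one-line sketch that simply asserts such a choice exists (taking $m/d=M$); you instead make the explicit universal choice $\gamma=1^{ab}$, $m=ab$, $d=1$, which already covers every $\ell\mid M$ since $M\mid ab$.

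What your write-up adds over the paper is the non-vanishing check. The derivation of \eqref{eq:signRelation} carries the caveat ``unless both sides vanish'', and the paper does not verify this for its unspecified $\gamma$. Your argument via \cref{thm:skewKostkaAtRootsOfUnity} with $\nu=1^{ab/\ell}$ shows that $K_{a^b,1^{ab}}(\xi)\neq 0$ whenever $\varepsilon_\ell(a^b)\neq 0$, because $K^{(\ell)}_{a^b,1^{ab/\ell}}$ counts standard $\ell$-ribbon tableaux of the rectangle, and for a straight shape this number is positive exactly when the $\ell$-core is empty, i.e., when an $\ell$-ribbon tiling exists. This closes a gap that the paper leaves implicit. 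Your final adjustment $E=ab(a-1)/2+M$ to force $E>0$ is harmless since $\xi^M=1$ for every $\ell\mid M$.
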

\begin{proof}
Use \eqref{eq:signRelation}, we can choose $m$ and $d$ such that $M = \frac{m}{d}$ and a $\gamma$, satisfying $\rot_m^d(\gamma)=\gamma$, $\sum \gamma_i=ab$.
\end{proof}

We shall now use \cref{cor:signCorrection} to study shapes which are 
disjoint unions of rectangles.
For positive integer vectors $\avec=(a_1,\dots,a_r), \bvec=(b_1,\dots,b_r)$, let 
\[
\mdefin{\avec^\bvec} \coloneqq (a_1^{b_1}) \oplus \dotsb \oplus (a_r^{b_r})
\]
denote a skew shape consisting of a disjoint union of $r$ rectangles,
where rectangle $k$, $1\leq k \leq r$, has shape $a_k^{b_k}$.

\begin{lemma}\label{lem:sgnSeveralRects}
 Let $\avec^\bvec$ be a disjoint union of rectangles, 
 and $\gamma$ be a composition of length $m$ and size $a_1b_1+\dotsb + a_r b_r$,
 such that $\rot_m^d(\gamma) = \gamma$ for some $d \mid m$.
 Then there is an $E \in \setN$ (depending on $\avec^\bvec$, $m$ and $d$), 
 such that for all $j \mid \frac{m}{d}$ where $K_{\avec^\bvec,\gamma}(\xi) \neq 0$,
 \begin{equation}
  \xi^{E} =  \sign K_{\avec^\bvec,\gamma}(\xi),
\end{equation}
where $\xi$ is a primitive $j^\thsup$ root of unity.
\end{lemma}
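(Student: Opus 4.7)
The approach is to reduce to Corollary \ref{cor:signCorrection} rectangle by rectangle, exploiting multiplicativity of $\varepsilon_j$ over the disjoint components together with Theorem \ref{thm:skewKostkaAtRootsOfUnity}.

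First I would use Theorem \ref{thm:skewKostkaAtRootsOfUnity} to convert the sign of $K_{\avec^\bvec,\gamma}(\xi)$ into a concrete $\pm 1$. If $\xi$ is a primitive $j^\thsup$ root of unity with $K_{\avec^\bvec,\gamma}(\xi)\neq 0$, then
\[
K^{(j)}_{\avec^\bvec,\gamma^{1/j}} \;=\; (-1)^{|\gamma^{1/j}|(j-1)}\,\varepsilon_j(\avec^\bvec)\,K_{\avec^\bvec,\gamma}(\xi).
\]
The left-hand side counts semistandard $j$-ribbon tableaux and is thus a non-negative integer, while $|\gamma^{1/j}|=|\avec^\bvec|/j$, so
\[
\sign K_{\avec^\bvec,\gamma}(\xi) \;=\; (-1)^{|\avec^\bvec|(j-1)/j}\,\varepsilon_j(\avec^\bvec).
\]
In particular $\varepsilon_j(\avec^\bvec)\neq 0$, and hence $j\mid a_kb_k$ for every $k$.

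Second, I would invoke the multiplicativity $\varepsilon_j(\avec^\bvec)=\prod_k \varepsilon_j(a_k^{b_k})$, which follows from the definition since any $j$-ribbon tiling of a disjoint union of rectangles breaks into a tiling of each rectangle. Combined with the additive splitting $|\avec^\bvec|(j-1)/j=\sum_k a_kb_k(j-1)/j$, this gives
\[
\sign K_{\avec^\bvec,\gamma}(\xi) \;=\; \prod_{k=1}^{r} (-1)^{a_kb_k(j-1)/j}\,\varepsilon_j(a_k^{b_k}).
\]
For each rectangle I then apply Corollary \ref{cor:signCorrection} with $M_k \coloneqq \gcd(m/d,\,a_kb_k)$ (which divides $a_kb_k$, as required) to obtain an integer $E_k>0$ such that $(-1)^{a_kb_k(\ell-1)/\ell}\varepsilon_\ell(a_k^{b_k}) = \zeta^{E_k}$ for every $\ell\mid M_k$ with $\varepsilon_\ell(a_k^{b_k})\neq 0$, where $\zeta$ is a primitive $\ell^\thsup$ root of unity. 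Since $j\mid m/d$ and $j\mid a_kb_k$, we have $j\mid M_k$, so the identity may be evaluated at $\ell=j$, $\zeta=\xi$.

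Finally, setting $E\coloneqq \sum_k E_k$ and multiplying the rectangle identities over $k=1,\dotsc,r$ yields $\xi^{E} = \sign K_{\avec^\bvec,\gamma}(\xi)$, with $E$ depending only on $\avec^\bvec$, $m$ and $d$. The main point of care is to ensure that a single pair $(E_k, M_k)$ can serve every value of $j$ appearing in the conclusion; this is handled by the uniform choice $M_k=\gcd(m/d,\,a_kb_k)$, together with the observation that only those $j$ satisfying $j\mid a_kb_k$ for every $k$ can possibly give $K_{\avec^\bvec,\gamma}(\xi)\neq 0$.
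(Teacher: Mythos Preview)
Your proof is correct and follows essentially the same route as the paper's: use \cref{thm:skewKostkaAtRootsOfUnity} to express $\sign K_{\avec^\bvec,\gamma}(\xi)$ as $\prod_k (-1)^{a_kb_k(j-1)/j}\varepsilon_j(a_k^{b_k})$, then apply \cref{cor:signCorrection} to each rectangle and sum the resulting exponents. Your choice $M_k=\gcd(m/d,\,a_kb_k)$ is in fact a cleaner way to ensure the hypotheses of \cref{cor:signCorrection} than the paper's phrasing ``it suffices to consider the case when $m/d$ divides $\gcd(a_1b_1,\dots,a_rb_r)$''.
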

\begin{proof}
First of all, \cref{thm:skewKostkaAtRootsOfUnity} implies that unless $j$ 
divides $a_k b_k$ for all $k \in \{1,\dotsc,r\}$, we have $K_{\avec^\bvec,\gamma}(\xi) = 0$
where $\xi$ is a primitive $j^\thsup$ root of unity.
Hence, it suffices to consider the case when 
\begin{equation}\label{eq:dividesCondition}
 \frac{m}{d} \quad \text{ divides } \quad \gcd( a_1 b_1, a_2 b_2,\dotsc,a_r b_r).
\end{equation}
By \cref{thm:skewKostkaAtRootsOfUnity} and some rewriting, we have that 
\begin{align*}
 \sign K_{\avec^\bvec,\gamma}(\xi)
 &=  (-1)^{\frac{1}{j}|\gamma| (j-1) } \varepsilon_{j}(\avec^\bvec) \\
 &= (-1)^{\frac{1}{j}|a_1b_1 + \dotsb + a_r b_r| (j-1) } \prod_{k=1}^r 
 \varepsilon_{j}(a_k^{b_k}) \\
 &= \prod_{k=1}^r 
 (-1)^{\frac{a_kb_k}{j}(j-1) }
 \varepsilon_{j}(a_k^{b_k}).
 \end{align*}
 We can now use \cref{cor:signCorrection}---since we assume \eqref{eq:dividesCondition}---on
 each of the factors in the right hand side, and deduce that
 $
  \sign K_{\avec^\bvec,\gamma}(\xi) = \xi^{E_1 + \dotsb + E_r} = \xi^{E}
 $
for some fixed $E$ which does not depend on $j$, but only on $\avec^\bvec$, $m$ and $d$.
\end{proof}

We can now prove the main result of this subsection.
\begin{theorem}[Cyclic sieving on disjoint rectangles]\label{thm:rectanglesCSP}
Suppose $\gamma = (\gamma_1,\dotsc,\gamma_m)$
is an integer vector with total sum $a_1b_1+\dotsb + a_r b_r$,
and such that $\rot_m^d(\gamma) = \gamma$.
Then there exists some $E = E(\avec^\bvec,\gamma) \in \setN$ such that
\[
\left( 
\SSYT(\avec^\bvec, \gamma),
\langle \partial^d \rangle,
q^{E} K_{\avec^\bvec, \gamma}(q)
\right)
\]
is a CSP-triple.
\end{theorem}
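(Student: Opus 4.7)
The plan is to reduce the multi-rectangle setting to the single-rectangle CSP of \cref{thm:oneRectangleSSYTCSP} by exploiting that promotion acts componentwise on disjoint rectangles, and then to identify values at roots of unity using \cref{thm:skewKostkaAtRootsOfUnity} together with the sign uniformity supplied by \cref{lem:sgnSeveralRects}. Set $n = m/d$ and fix a primitive $n^\thsup$ root of unity $\eta$. Since both fixed-point counts and values of any legitimate CSP polynomial depend only on $\gcd(s,n)$, it suffices to show that $|\{T : \partial^{sd}(T) = T\}| = \eta^{sE} K_{\avec^\bvec,\gamma}(\eta^s)$ for every divisor $s$ of $n$, for a single integer $E$ depending only on $\avec^\bvec$, $m$ and $d$.

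Because the rectangles in $\avec^\bvec$ sit in pairwise disjoint rows and columns, no jeu-de-taquin slide can transport a dot between rectangles. Writing $T = T_1 \sqcup \dotsb \sqcup T_r$ with $T_k \in \SSYT(a_k^{b_k}, \gamma^{(k)})$ and $\sum_k \gamma^{(k)} = \gamma$, we therefore obtain $\partial(T) = \partial(T_1) \sqcup \dotsb \sqcup \partial(T_r)$, and promotion cyclically rotates each $\gamma^{(k)}$. Hence $\partial^{sd}(T) = T$ if and only if every $T_k$ is fixed, which in particular forces $\rot_m^{sd}(\gamma^{(k)}) = \gamma^{(k)}$; equivalently $\gamma^{(k)} = (\beta^{(k)})^{\ell}$ for some composition $\beta^{(k)}$ of length $sd$, where $\ell = m/(sd)$.

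Applying \eqref{eq:fixedPts} to each rectangle, with $sd$ playing the role of $d$, gives $|\{T_k : \partial^{sd}T_k = T_k\}| = K^{(\ell)}_{a_k^{b_k},\beta^{(k)}}$. Summing over admissible splittings of $\beta \coloneqq \gamma^{sd/m}$ yields
\[
|\{T : \partial^{sd}(T) = T\}| = \sum_{\beta = \sum_k \beta^{(k)}} \prod_{k=1}^{r} K^{(\ell)}_{a_k^{b_k},\beta^{(k)}} = K^{(\ell)}_{\avec^\bvec,\beta},
\]
where the last equality uses that each $\ell$-ribbon is connected and thus confined to a single rectangle, so any $\ell$-ribbon tableau of $\avec^\bvec$ decomposes uniquely into an $r$-tuple of $\ell$-ribbon tableaux of the individual rectangles (the horizontal-strip conditions decouple across components). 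By \cref{thm:skewKostkaAtRootsOfUnity}, this count equals $(-1)^{|\beta|(\ell-1)} \varepsilon_\ell(\avec^\bvec) K_{\avec^\bvec,\gamma}(\xi_\ell)$ for any primitive $\ell^\thsup$ root of unity $\xi_\ell$. We choose $\xi_\ell = \eta^s$, which is primitive of order $\ell$ because $s \mid n$, and invoke \cref{lem:sgnSeveralRects} to identify the $\pm 1$ sign with $\xi_\ell^E = \eta^{sE}$ for an integer $E$ that is independent of $s$. This produces the required identity, and shifting $E$ by a multiple of $n$ if necessary ensures $q^E K_{\avec^\bvec,\gamma}(q) \in \setN[q]$.

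The main obstacle is the sign bookkeeping: a priori the prefactor in \cref{thm:skewKostkaAtRootsOfUnity} depends on $\ell = m/(sd)$, yet the CSP polynomial must carry a single $q^E$ correction that works simultaneously for every divisor $s$ of $n$. This uniformity is precisely the content of \cref{lem:sgnSeveralRects}, which leverages multiplicativity of $\varepsilon_\ell$ over the disjoint components together with \cref{cor:signCorrection} (itself bootstrapped from the single-rectangle CSP of Fontaine--Kamnitzer). Once this sign uniformity and the routine componentwise decomposition of $\ell$-ribbon tableaux are in hand, the remainder of the argument is a direct assembly.
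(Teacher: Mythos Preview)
Your proof is correct and follows essentially the same approach as the paper's: both arguments exploit the componentwise action of promotion on disjoint rectangles, reduce the fixed-point count to a product over rectangles via \eqref{eq:fixedPts}, reassemble this product as the skew $\ell$-ribbon Kostka number $K^{(\ell)}_{\avec^\bvec,\beta}$, and then invoke \cref{thm:skewKostkaAtRootsOfUnity} together with \cref{lem:sgnSeveralRects} to identify the resulting sign with a uniform power $\xi_\ell^{E}$. The only difference is cosmetic ordering---you compute the fixed-point side first and then match it to the polynomial, whereas the paper evaluates the polynomial first---but the logical content is identical.
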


Before the proof, let us briefly discuss some details.
Promotion rotates the content, $\gamma$,
so in order for $\partial^d$ to fix an element, it is trivially necessary 
that $\gamma$ is fixed under $\rot_m^d$.
However, since promotion act independently on each rectangle,
we must have that the content on the $k^\thsup$ rectangle, $\nu^{(k)}$,
also has this rotational symmetry in order for a tableau to be a fixed-point,
for every $k=1,\dotsc,r$.

\begin{proof}
Let $\xi$ be a primitive $j^\thsup$ root of unity, 
where $j \mid \frac{m}{d}$. 
It is given that $\gamma$ is the concatenation of
$m/d$ copies of some smaller composition, so we can 
set $\mu \coloneqq \gamma^{jd/m} = (\gamma_1,\dotsc,\gamma_{jd})$.

By using \cref{lem:sgnSeveralRects}, we can conclude that there is an $E>0$,
depending only on $\avec^\bvec$ and $\gamma$, such that
$q^E K_{\avec^\bvec, \gamma}(q)$ is a non-negative integer as $q = \xi$.
Hence, we have that
$\xi^{E} K_{\avec^\bvec, \gamma}(\xi)$
is equal to $K^{(m/(jd))}_{\avec^\bvec,\mu}$,
and it remains to show that
\begin{equation}\label{eq:rectanglesEquality}
 K^{(m/(jd))}_{\avec^\bvec,\mu} = |\{ T \in \SSYT(\avec^\bvec,\gamma) \} :   \partial^{jd}(T) = T \}|. 
\end{equation}
Since $\partial$ act on each rectangle independently, we have that
the right hand side of \eqref{eq:rectanglesEquality} is given by
\[
\sum_{\nu^{(1)} + \dotsb + \nu^{(r)} = \gamma}
\prod_{k=1}^r
|\{ T \in \SSYT(a_k^{b_k},\nu^{(k)}) \} : \partial^{jd}(T) = T \}|,
\]
as we need to distribute the entries in $\gamma$ among the different rectangles.
However, the product is $0$ unless each composition $\nu^{(i)}$ has the
rotational symmetry $\rot^{jd}_m(\nu^{(i)}) = \nu^{(i)}$,
so we have
\begin{equation}\label{eq:rectanglesEqualityRHS}
\sum_{\rho^{(1)} + \dotsb + \rho^{(r)} = \mu}
\prod_{k=1}^r
|\{ T \in \SSYT(a_k^{b_k},(\rho^{(k)})^{m/(jd)} ) \} : \partial^{jd}(T) = T \}|.
\end{equation}
Now, the left hand side of \eqref{eq:rectanglesEquality} is given by
\begin{equation}\label{eq:rectanglesEqualityLHS}
\sum_{\rho^{(1)} + \dotsb + \rho^{(r)} = \mu}
\prod_{k=1}^r
K^{(m/(jd))}_{a_k^{b_k},\rho^{(k)}}
\end{equation}
since any skew semistandard ribbon tableau of shape $\avec^\bvec$
and content $\mu$ is formed from $r$ semistandard ribbon tableaux of rectangular shape,
where the total content is $\mu$.
We can now see that \eqref{eq:rectanglesEqualityRHS} and \eqref{eq:rectanglesEqualityLHS}
agree since by \cref{thm:skewKostkaAtRootsOfUnity}, for every $k \in \{1,\dotsc,r\}$,
\[
 |\{ T \in \SSYT(a_k^{b_k},(\rho^{(k)})^{m/(jd)} ) \} : \partial^{jd}(T) = T \}|
 = K^{(m/(jd))}_{a_k^{b_k},\rho^{(k)}}.
\]
Hence, we have proved the CSP.
\end{proof}

In the above proof, we were able to adjust the sign of $K_{\avec^\bvec, \gamma}(q)$,
by multiplying with an appropriate power of $q$, so that 
the result is a CSP-polynomial.
The following example illustrates that adjusting the
sign of a potential CSP-polynomial is not always possible.
\begin{example}
Let 
\[
\begin{cases}
 f(q) &= 6 + 2 q + 3 q^2 + 2 q^3 + 3 q^4 + 2 q^5 \\
 g(q) &=  4 + 3 q + 4 q^2 + 4 q^4 + 3 q^5.
\end{cases}
\]
If we let $\xi$ be a primitive $6^\thsup$ root of unity, then 
\begin{align*}
\left( f(\xi^1),f(\xi^2),f(\xi^3),f(\xi^6) \right) &= (3,3,6,18) \\
\left( g(\xi^1),g(\xi^2),g(\xi^3),g(\xi^6) \right) &= (3,-3,6,18).
\end{align*}
One can show that there is some $X$ of cardinality $18$, 
such that $\langle X, \setZ/6\setZ, f(q) \rangle$ is a CSP-triple.
However, there is no $E \in \setZ$ 
such that $q^E g(q)$ is a non-negative integer at every $6^\thsup$ root of unity.
\end{example}

\section{Bi-cyclic sieving on ribbon SYT}\label{sec:biCSP}

A natural generalization of CSP is when the product of
two cyclic groups act simultaneously on the set. 
It is called bicyclic sieving and was first considered
in \cite{BarceloReinerStanton2008} and can be defined as follows. 
Assume we have two cyclic groups $C_1$, $C_2$ with generators $c_1$, $c_2$ of order $k_1$, $k_2$ respectively, 
acting on a finite set $X$. Let $f(q,t)$ be a bivarite polynomial and $\zeta_1$, $\zeta_2$ be primitive 
$k_1$ and $k_2$-roots of unity respectively.
Then we say that $(X,C_1\times C_2, f(q,t))$ 
exhibits the \defin{bicyclic sieving phenomenon}, biCSP for short,
if for any $i,j\in \setZ $ we have 
\[
f(\zeta_1^i,\zeta_2^j)=|\{x\in X: c_1^ic_2^j \cdot x=x \}|. 
\]
That is, we have cyclic sieving for both cyclic groups, not only separately but also jointly.
We prove biCSP for two families of ribbon SYT in this subsection.

Let $\mdefin{\SYT_R(\alpha_1,\dotsc,\alpha_\ell)}$ denote the set 
of ribbon standard Young tableaux with $\alpha_i$ boxes in row $i$.
\begin{remark}
The cardinality of $\SYT_R(m-b,b)$ is $\binom{m}{b}-1$, 
as there are $\binom{m}{b}$ ways to choose the second row, 
and all choices but $1,2,\dotsc,b$ give a valid standard filling.
\end{remark}

\begin{theorem}
The action  $\partial$ act on $\SYT_R(m-b,b)$ has a unique orbit of size $m-1$,
and all other orbits have sizes dividing $m$.
In particular,
\[
\left( \SYT_R(m-b,b) , \langle \partial^{m}\rangle,
\binom{m}{b} - m + [m-1]_q
\right)
\] and
\[
\left( \SYT_R(m-b,b) , \langle \partial^{m-1}\rangle,
\qbinom{m}{b}_q  - [m]_q + n-1
\right)
\]
are CSP-triples.
\end{theorem}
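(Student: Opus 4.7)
The plan is to identify promotion on $\SYT_R(m-b,b)$ with a mildly modified rotation on binary words, and then to read off both the orbit structure and the two CSPs from standard facts about $C_m$-actions. By the preceding remark, the map $T \mapsto S(T) := \{\text{labels in the bottom row of } T\}$ is a bijection $\SYT_R(m-b,b) \to \binom{[m]}{b} \setminus \{w_0\}$, where $w_0 := \{1,\dotsc,b\}$ is the unique excluded subset. Letting $\rho$ denote the rotation $S \mapsto S-1 \pmod{m}$ on $\binom{[m]}{b}$, the key step is to show that under this bijection,
\[
\partial(T) = \begin{cases} \rho(S(T)) & \text{if } \rho(S(T)) \ne w_0, \\ \rho^2(S(T)) & \text{if } \rho(S(T)) = w_0, \end{cases}
\]
so promotion equals rotation except on the unique subset $S(T)=\{2,\dotsc,b+1\}$, where it ``skips over'' the forbidden $w_0$. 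I would prove this by a direct trace of the jeu-de-taquin slide: after removing the label $1$ (which lies either at the leftmost bottom cell or at the overlap column of the top row), moving the hole to an outer corner, globally decrementing, and inserting $m$ at the vacated cell, every label $i$ ends up in the row-assignment of $i-1 \pmod{m}$, and the exceptional case arises precisely when the naive rotation would produce $w_0$.

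From this claim the orbit structure is immediate. Under $\rho$ on $\binom{[m]}{b}$, the orbit $O$ of $w_0$ has size exactly $m$ (because $w_0$ has no nontrivial cyclic symmetry for $1 \le b \le m-1$), while every other orbit has size dividing $m$. Splicing in the skip rule and deleting $w_0$ converts $O$ into a single promotion cycle $O^* := O \setminus \{w_0\}$ of length $m-1$, while every other rotation orbit is unchanged as a promotion orbit of size dividing $m$. This proves the first sentence of the theorem.

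For the two CSPs, decompose $\SYT_R(m-b,b) = O^* \sqcup Y$ with $Y := \binom{[m]}{b} \setminus O$, and use that a disjoint union of CSP triples for the same cyclic group is itself a CSP triple with sum polynomial. For $\langle \partial^m\rangle$: since $\gcd(m,m-1)=1$, $\partial^m = \partial$ on $O^*$, yielding the CSP $(O^*, \langle \partial^m\rangle, [m-1]_q)$, while $\partial^m$ fixes $Y$ pointwise (all orbit sizes in $Y$ divide $m$), contributing the constant $|Y| = \binom{m}{b} - m$; summing gives $\binom{m}{b} - m + [m-1]_q$. For $\langle \partial^{m-1}\rangle$ viewed as a cyclic group of order $m$ (note $(\partial^{m-1})^m = \mathrm{id}$ on $\SYT_R(m-b,b)$, since every orbit size divides $m(m-1)$): $\partial^{m-1}$ fixes $O^*$ pointwise (contributing the constant $m-1$, interpreted as the ``$n-1$'' of the statement with $n=m$), while on $Y$ it coincides with $\rho^{-1}$ and generates the same $C_m$-action as $\rho$. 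A direct check at $m$-th roots of unity shows that $(Y, C_m, \qbinom{m}{b}_q - [m]_q)$ is a CSP, obtained by subtracting the obvious CSP $(O, C_m, [m]_q)$ from the classical rotation CSP $(\binom{[m]}{b}, C_m, \qbinom{m}{b}_q)$; summing yields $\qbinom{m}{b}_q - [m]_q + (m-1)$.

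The main obstacle is the key claim about promotion: while morally clear (promotion cyclically decrements labels), the jeu-de-taquin slide on a ribbon branches depending on the position of the label $1$ and on the local structure near the overlap column, so isolating exactly the one ``skip'' configuration requires a careful case analysis on these local data. Once that lemma is in hand, the orbit counting and CSP bookkeeping are entirely routine.
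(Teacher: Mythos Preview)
Your approach is correct and matches the paper's: both identify promotion with cyclic rotation $S\mapsto S-1\pmod m$ on the bottom-row subset, except at the single tableau with bottom row $\{2,\dotsc,b+1\}$ where it skips over the forbidden $w_0=\{1,\dotsc,b\}$, and then obtain the orbit structure and the two CSPs by splitting off the special $(m-1)$-orbit from the remaining rotation orbits. The paper carries out exactly the jeu-de-taquin case analysis you flag as the main obstacle (splitting on whether $1$ lies in the bottom or top row, and in the latter case on the order of $w_b$ versus $w_{b+2}$), and for the second CSP uses the same subtraction $\qbinom{m}{b}_q-[m]_q$ from the classical rotation CSP on $\binom{[m]}{b}$.
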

\begin{proof}

Let us consider the action of promotion on the ribbon.
Let $w=w_1\dots w_m$ be the reading word of the tableau. Either $w_1=1$ or $w_{b+1}=1$. 
If $w_{1}=1$ or if $w_{b+1}=1$ and $w_{b}<w_{b+2}$, the promotion acts separately on each row.
\[ 
\ytableausetup{boxsize=2em}
\begin{ytableau}
\none & \none & \none &\scriptstyle   w_{b+1} & \scriptstyle w_{b+2} & \cdots  &  w_{m} \\
w_{1}& w_{2} & \cdots  & w_b\\
\end{ytableau}
\]
Only if $w_{b+1}=1$ and $w_{b+2}<w_{b}$ an exchange happens
between rows, and that is only possible when the first
row is $1, b+2, b+3, \dotsc, m$. 
Let us call this particular tableau $T^*$. 
In the figure below we will use the notation $a\coloneqq m-b$ for convenience.
The orbit of $T^*$ is of size $m-1$:
\[
\ytableausetup{boxsize=2em}
\begin{ytableau}
\none & \none & \none &  \none &   1 &  b\+2 &  b\+3 & \cdots  &  m \\
2& 3& \cdots  & \cdots & b \+ 1 \\
\end{ytableau}
\]

\[
\xrightarrow{\partial}\quad
\begin{ytableau}
\none & \none & \none & \none & b & b\+1 & \cdots   & \cdots &  m\minus 1 \\
1& 2& \cdots & b \minus 1 & m \\
\end{ytableau}\]

\[
\xrightarrow{\partial^{b-1}}\quad
\begin{ytableau}
\none & \none & \none & \none &   1 & 2 & \cdots  & \cdots &  m\minus b \\
\scriptstyle a\+ 1& \scriptstyle a\+ 2& \cdots &  \cdots & m \\
\end{ytableau}
\]

\[
\xrightarrow{\partial^{a-1}}\quad
\begin{ytableau}
\none & \none & \none &  \none &   1 &  b\+2 &  b\+3 & \cdots  &  m \\
2& 3& \cdots  & \cdots & b \+ 1 \\
\end{ytableau}
\]

In any other orbit, there is no exchange between the rows,
so promotion affects them independently. 
As each row is a rectangular tableaux and our alphabet has $m$ elements,
the order of promotion divides $m$. 

The first CSP follows as $\langle \partial^{m}\rangle$ has order $m-1$,
and we have one orbit of size $m-1$, plus $\binom{m-1}{b}-m$
elements that are fixed by everything.

For the second CSP, note that $\partial^{m-1}$ fixes the $(m-1)$-cycle described above.
For the rest of the tableaux, the action of $\partial^{m-1}$ on the first row
matches the rotation action on $b$ element subsets of $m$,
except that we are missing the elements belonging to the $(m-1)$-cycle
and the subset $\{m-b+1,m-b+2,\dotsc,m\}$ which violates the tableau rules.

Note that under the rotation action these element actually form
an $m$-cycle whose fixed points can be calculated by plugging
in the appropriate root of unity to $[m]_q$.
Subtracting this from the polynomial $\qbinom{m}{b}_q$ given
by the CSP of rotation and adding back
the $m-1$ fixed points gives us the desired result. 
\end{proof}

These two can be combined to give a CSP for the action of $\partial$, 
but we will instead give a bicyclic version as the polynomial is nicer.
\begin{corollary}\label{cor:firstBiCSP}
Promotion $\partial$ acting on $\SYT_R(m-b,b)$ has order $m(m-1)$. 
Let $\psi_{m,b}(q,t) \coloneqq  [m-1]_q + \binom{m}{b}_t - [m]_t$,
let $\xi$ be a primitive $(m-1)^\thsup$ root of unity, and $\zeta$
be a primitive $m^\thsup$ root of unity. Then for all $r,s \in \setZ$,
\[
\psi_{m,b}(\xi^r, \zeta^s) = 
|\{ T \in \SYT_R(m-b,b) : \partial^{r m + s (m-1)}(T) = T  \}|.
\]
In other words, 
\[
\left( \SYT_R(m-b,b) , \langle \partial^{m}\rangle \times \langle \partial^{m-1}\rangle, 
\binom{m}{b}_t - [m]_t + [m-1]_q
\right)
\]
exhibits the bi-cyclic sieving phenomenon. 
\end{corollary}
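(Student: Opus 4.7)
The plan is to reduce the bi-cyclic sieving claim to the two univariate CSP-triples proved in the previous theorem, exploiting that $\gcd(m,m-1)=1$. Since $\partial$ has order $m(m-1)$, the subgroups $\langle\partial^{m}\rangle$ and $\langle\partial^{m-1}\rangle$ have orders $m-1$ and $m$ respectively, intersect trivially, and together generate $\langle\partial\rangle$. Hence every element of $\langle\partial\rangle$ is uniquely of the form $\partial^{rm+s(m-1)}$ with $0\le r<m-1$ and $0\le s<m$, and it suffices to verify the fixed-point identity at each such pair $(r,s)$.

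The key structural input is the orbit decomposition established in the proof of the previous theorem: $\SYT_R(m-b,b)$ is the disjoint union of one distinguished orbit $O^{*}$ of size $m-1$ (the orbit of $T^{*}$) together with non-distinguished orbits, each of size dividing $m$. Because $|O^{*}|=m-1$, the power $\partial^{m-1}$ acts as the identity on $O^{*}$, so on $O^{*}$ the operators $\partial^{rm+s(m-1)}$ and $\partial^{rm}$ coincide. Conversely, on a non-distinguished orbit of size $d\mid m$ the power $\partial^{m}$ acts trivially, so there $\partial^{rm+s(m-1)}$ coincides with $\partial^{s(m-1)}$. Consequently the fixed-point count of $\partial^{rm+s(m-1)}$ splits cleanly as a contribution from $O^{*}$ (governed by $\partial^{rm}$) plus a contribution from its complement (governed by $\partial^{s(m-1)}$).

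Each summand is isolated by plugging a root of unity into the appropriate univariate CSP-polynomial and subtracting the ``universally fixed'' contribution of the opposite part. At $q=\xi^{r}$, the first CSP-triple counts all $\binom{m}{b}-m$ tableaux outside $O^{*}$ as fixed by $\partial^{rm}$ (they are fixed by $\partial^{m}$), so the remaining $O^{*}$-contribution equals $[m-1]_{\xi^{r}}$. At $t=\zeta^{s}$, the second CSP-triple counts the $m-1$ elements of $O^{*}$ as fixed by $\partial^{s(m-1)}$ (they are fixed by $\partial^{m-1}$), so the remaining complement-contribution equals $\qbinom{m}{b}_{\zeta^{s}}-[m]_{\zeta^{s}}$. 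Summing the two remainders yields exactly $\psi_{m,b}(\xi^{r},\zeta^{s})$, which gives the biCSP.

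The argument is essentially bookkeeping once the orbit structure from the previous theorem is in hand, and I do not foresee a substantive obstacle. The only point requiring care is confirming that the subtracted universally-fixed contribution is correct uniformly in $r$ and $s$, which follows at once from the two triviality statements $\partial^{m}\!\mid_{\text{complement of }O^{*}}=\id$ and $\partial^{m-1}\!\mid_{O^{*}}=\id$, together with the standard root-of-unity evaluations $[m-1]_{\xi^{r}}=(m-1)\mathbf{1}[(m-1)\mid r]$ and $[m]_{\zeta^{s}}=m\,\mathbf{1}[m\mid s]$.
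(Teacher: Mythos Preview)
Your proposal is correct and follows essentially the same approach as the paper, which states the result as a corollary with no separate proof beyond the remark that the two univariate CSP-triples ``can be combined.'' You have spelled out that combination explicitly: split the fixed-point count along the orbit decomposition (the unique $(m-1)$-orbit versus the union of orbits of size dividing $m$), observe that on each piece only one of the two generators acts nontrivially, and match the pieces to the two CSP-polynomials. The one point neither you nor the paper handles inside the corollary is the order claim $|\langle\partial\rangle|=m(m-1)$; the paper defers this to the subsequent proposition (and notes it fails for $b\in\{1,m-1\}$ and $b=m-b=2$), so your treating it as given is consistent with how the paper is organized.
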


If $b=1$ or $m-1$, the total number of tableaux is $m-1$, so the $(m-1)$-orbit
is the only one. If $b=m-b=2$, we have $5$ tableaux in total,
which are divided into a $3$-orbit and a $2$-orbit,
so the promotion has order $6$.
Next, we show that apart from these trivial cases,
promotion on two rows has order $m(m-1)$.
\begin{proposition} 
If $b,m-b>1$ and we do not have $b=m-b=2$,
$\partial$ on $\SYT_R(m-b,b)$ has order $m(m-1)$.
\end{proposition}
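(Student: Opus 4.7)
The plan is to leverage the preceding theorem, which establishes that $\partial$ has exactly one orbit of size $m-1$ (that of $T^*$) and every other orbit has size dividing $m$. Since $\gcd(m-1,m)=1$, the order of $\partial$ equals $\mathrm{lcm}(m-1, d) = (m-1)d$ where $d \mid m$ is the largest orbit size among the remaining orbits. It therefore suffices to exhibit a non-$T^*$ orbit of size exactly $m$.

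First, I would identify promotion on the non-$T^*$ orbits with cyclic rotation on subsets of $[m] := \{1, \ldots, m\}$. To each $T \in \SYT_R(m-b,b)$ I associate the subset $S(T) \subseteq [m]$ of entries appearing in the top row; this is a bijection onto the collection of size-$(m-b)$ subsets of $[m]$ other than $S_{\mathrm{inv}} := \{b+1, \ldots, m\}$, the unique such subset violating the column-strictness condition $w_{b+1} < w_b$. Revisiting the two non-exchange cases in the preceding proof, one checks that when $T$ is not in the $T^*$-orbit, promotion acts by $S(\partial T) = S(T) - 1 \pmod m$ (with $0$ identified with $m$), i.e., by the cyclic rotation $r$ on $\setZ/m\setZ$. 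The $T^*$-orbit corresponds on the subset side to the rotation orbit of the contiguous arc $S(T^*) = \{1, b+2, \ldots, m\}$, and this orbit passes through $S_{\mathrm{inv}}$; promotion skips $S_{\mathrm{inv}}$ via the single exchange step at $T^*$, giving an orbit of $m-1$ valid tableaux instead of $m$. Consequently, for any non-contiguous valid $S$ the promotion orbit of the corresponding tableau has size equal to the rotational period of $S$.

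Accordingly, my candidate is $S := \{1, 2, \ldots, m-b-1, m-b+1\}$. Its cyclic gap sequence in $\setZ/m\setZ$ is $(1, 1, \ldots, 1, 2, b)$ with $m-b-2$ ones, which differs from the gap sequence $(1^{m-b-1}, b+1)$ of any contiguous arc, so $S$ is non-contiguous and valid. Writing the rotational period of $S$ as $mp/(m-b)$ where $p$ is the cyclic period of this gap sequence, my goal becomes to show $p = m-b$ whenever $(b, m-b) \neq (2, 2)$.

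The main step is this period calculation. If $b \geq 3$, the gap value $b$ appears exactly once in the sequence, so any period block must contain it; hence there is only one block, i.e., $p = m-b$. The case $m-b = 2$ with $b \geq 3$ is handled directly: the gap sequence $(2, b)$ has length $2$ with distinct entries, so cyclic period $2 = m-b$. If $b = 2$ and $m \geq 5$, the sequence is $(1^{m-4}, 2, 2)$ with the two $2$'s at adjacent cyclic positions; the only candidate for a proper period is $p' = (m-2)/2$ (requiring $m$ even), but a direct orbit computation under the shift by $(m-2)/2$ sends one of the $2$'s to position $(m-4)/2$, which is not the position of the other $2$, a contradiction. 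The sole failure is $b = m-b = 2$, where the gap sequence degenerates to $(2, 2)$ of period $1$---precisely the excluded case. In every other case, the orbit of the tableau corresponding to $S$ has size $m$, and since it is disjoint from the $T^*$-orbit, $\partial$ has order $\mathrm{lcm}(m-1, m) = m(m-1)$.
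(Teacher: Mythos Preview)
Your proof is correct and follows essentially the same strategy as the paper: exhibit a single tableau outside the $(m-1)$-orbit whose promotion period is exactly $m$. The paper chooses the tableau with bottom row $\{1,\dotsc,b-1,b+1\}$ (dual to your choice under $b\leftrightarrow m-b$) and simply asserts that promotion has order $m$ on it; your gap-sequence analysis actually supplies the verification that the paper omits.
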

\begin{proof} 
We will show that in these cases, there is always an orbit of size $m$.
Consider the tableaux with bottom row $1,2,\dotsc,b-1, b+1$.
This tableau does not come up in the $(m-1)$-cycle, so promotion
is applied independently to the two rows.
As promotion has order $m$ on the bottom row, it has order $m$ on the tableau.
\end{proof}

Note that the same arguments apply to the two column case by symmetry.
Next, we consider the three row ribbon case where
the first and last rows consist of one box only.

\begin{lemma} 
We have that
$|\SYT_R(1,m-2,1)|=(m-1)(m-2)-1$.
\end{lemma}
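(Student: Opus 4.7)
The plan is to parameterize a tableau in $\SYT_R(1, m-2, 1)$ by the pair $(u, d)$ of values in its two single-cell rows (top and bottom respectively); the middle row is then forced to be the sorted list $M_1 < M_2 < \dotsb < M_{m-2}$ of the remaining $m-2$ values. With the ribbon convention already used in the two-row analysis above, the top single cell sits directly above $M_{m-2}$ and the bottom single cell sits directly below $M_1$, so the only nontrivial column-strict conditions are $u < M_{m-2}$ and $M_1 < d$.

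There are $m(m-1)$ ordered pairs of distinct values $(u, d) \in \{1, \dotsc, m\}^2$, so it remains to subtract the number of pairs violating at least one inequality. Let $I_u$ be the set of pairs with $u \geq M_{m-2}$ and $I_d$ the set with $d \leq M_1$. Since $u$ is excluded from the middle row, $u \geq M_{m-2}$ forces $u = \max(\{1, \dotsc, m\} \setminus \{d\})$, which happens exactly for the $m - 1$ pairs with $u = m$ together with the single pair $(u, d) = (m-1, m)$; hence $|I_u| = m$. By the order-reversing symmetry $(u, d) \leftrightarrow (m+1-d, m+1-u)$ we also have $|I_d| = m$, and a direct check shows $I_u \cap I_d = \{(m, 1)\}$.

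Inclusion--exclusion then gives $|I_u \cup I_d| = 2m - 1$, so
\[
|\SYT_R(1, m-2, 1)| = m(m-1) - (2m - 1) = (m-1)(m-2) - 1,
\]
as claimed. The one conceptual step is pinning down the ribbon geometry, namely identifying which middle-row cells are adjacent to the two single-cell rows; this is forced by the same convention used for the two-row ribbons earlier in this section. As an independent cross-check, Aitken's skew Jacobi--Trudi determinant applied to $\lambda/\mu = (m-2, m-2, 1)/(m-3, 0, 0)$ yields a $3 \times 3$ determinant evaluating to $(m^2 - 3m + 1)/m!$, consistent with the formula above.
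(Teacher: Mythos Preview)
Your proof is correct and follows the same approach as the paper: parameterize a tableau by the pair $(u,d)$ of entries in the two single-cell rows, identify the column-strict constraints $u<M_{m-2}$ and $M_1<d$, and count. The only difference is bookkeeping: the paper first imposes $u\neq m$ ($m-1$ choices) and $d\neq\min(\{1,\dotsc,m\}\setminus\{u\})$ ($m-2$ choices), then observes that exactly one of these $(m-1)(m-2)$ pairs, namely $(u,d)=(m-1,m)$, still violates $u<M_{m-2}$; you instead start from all $m(m-1)$ ordered pairs and remove the $2m-1$ bad ones by inclusion--exclusion. Both arrive at $m^2-3m+1$.
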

\begin{proof}
There are $m-1$ ways to pick the entry to go into the first row,
as it can not be $m$, and $m-2$ ways to pick the entry in
the last row so that it won't be the smallest entry of the rest.
Of these $(m-1)(m-2)$ choices, only one does not give a
tableau---choosing $m-1$ for top row and $m$ for the bottom row.
\end{proof}

\begin{theorem}\label{thm:secondBiCSP}
For $m>3$, the $\partial$ acting on $\SYT_R(1,m-2,1)$ has order $(m-1)(m-2)$,
with one $(m-2)$-cycle, and $m-3$ $(m-1)$-cycles.
As a consequence, if we let 
$\mdefin{\psi_{m,b}(q,t)} \coloneqq [m-2]_t + (m-3)[m-1]_q $,
$\xi$ be a primitive $(m-1)^\thsup$ root of unity, and $\zeta$
be a primitive $(m-2)^\thsup$ root of unity, we have $r,s \in \setZ$,
\[
\psi_{m,b}(\xi^r, \zeta^s) = 
|\{ T \in \SYT_R(1,m-2,1) : \partial^{r (m-2) + s (m-1)}(T) = T  \}|.
\]
In other words, 
\[
\left( \SYT_R(1,m-2,1) , \langle \partial^{m-1}\rangle \times \langle \partial^{m-2}\rangle, 
[m-2]_t + (m-3)[m-1]_q
\right)
\]
exhibits the bi-cyclic sieving phenomenon. 
\end{theorem}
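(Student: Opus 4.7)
The plan is to carry out three stages. First, I would parametrize each $T \in \SYT_R(1,m-2,1)$ by the pair $(a,c)$, where $a$ is the entry of the top box and $c$ of the bottom box; the middle row is then the complement $\{1,\dotsc,m\} \setminus \{a,c\}$ listed in increasing order. The column constraints translate into the admissibility conditions $a \in \{1,\dotsc,m-1\}$, $c \in \{2,\dotsc,m\} \setminus \{a\}$, and $(a,c) \notin \{(1,2),(m-1,m)\}$, reproducing the count $(m-1)(m-2)-1$ from the preceding lemma.

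Second, I would compute $\partial$ explicitly as a map on these pairs. The entry $1$ sits either in the top box (when $a=1$) or in the leftmost middle cell (otherwise, forcing $b_1 = 1$). Tracking the jeu-de-taquin slide, and using that in this ribbon every middle cell except the leftmost has no box directly below it, one arrives at the rules (A) $(1,c) \mapsto (m-1,c-1)$ for $c \in \{3,\dotsc,m-1\}$; (A$'$) $(1,m) \mapsto (m-2,m-1)$; (B$'$) $(a,2) \mapsto (a-1,m)$ for $a \in \{3,\dotsc,m-1\}$; (B$''$) $(2,3) \mapsto (1,m)$; and (B) $(a,c) \mapsto (a-1,c-1)$ in every remaining admissible pair. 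This case analysis is the main computational step.

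With these rules in hand, I would trace the orbits. In every rule, $a$ strictly decreases by $1$ until $a = 1$, at which point it jumps to $m-1$ via (A) or to $m-2$ via (A$'$); consequently every orbit contains a unique pair with $a=1$. From $(1,m)$, rule (A$'$) produces the cycle $(1,m) \to (m-2,m-1) \to (m-3,m-2) \to \dotsb \to (2,3) \to (1,m)$ of length $m-2$. Starting at $(1,j)$ with $j \in \{3,\dotsc,m-1\}$, rule (A) sends it to $(m-1,j-1)$; the second coordinate then decreases by $1$ under (B) until it hits $2$, jumps to $m$ via (B$'$), and continues decreasing until the orbit closes at $(1,j)$ after exactly $m-1$ steps. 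This produces $m-3$ orbits of length $m-1$, which together with the $(m-2)$-cycle exhaust $(m-3)(m-1)+(m-2) = (m-1)(m-2)-1$ tableaux.

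For the bicyclic sieving, $\gcd(m-1,m-2) = 1$ makes $(r,s) \mapsto r(m-2)+s(m-1)$ a bijection $\setZ/(m-1) \times \setZ/(m-2) \to \setZ/((m-1)(m-2))$, so the joint action is well-defined and $\partial$ has order exactly $(m-1)(m-2)$. A tableau in an orbit of size $\ell$ is fixed by $\partial^k$ iff $\ell \mid k$; with $k = r(m-2)+s(m-1)$, coprimality shows the $(m-2)$-cycle contributes $m-2$ fixed points precisely when $(m-2) \mid s$, while each $(m-1)$-cycle contributes $m-1$ precisely when $(m-1) \mid r$. This matches $[m-2]_{\zeta^s} + (m-3)[m-1]_{\xi^r}$, since $[k]_q$ vanishes at every nontrivial $k$th root of unity and equals $k$ at $q = 1$. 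The main obstacle will be the step-two case analysis of the jeu-de-taquin slide; once the explicit pair-map is in hand, the cycle structure and the CSP verification are essentially formal.
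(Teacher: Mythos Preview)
Your proposal is correct and follows essentially the same approach as the paper: both arguments trace the promotion orbits directly, first isolating the unique $(m-2)$-cycle through the tableau with top entry $1$ and bottom entry $m$, and then verifying that each of the remaining $m-3$ tableaux with top entry $1$ lies on an $(m-1)$-cycle. Your explicit $(a,c)$-parametrization and case rules (A), (A$'$), (B), (B$'$), (B$''$) make the jeu-de-taquin analysis more systematic than the paper's picture-based trace, and you additionally spell out the root-of-unity evaluation for the biCSP, which the paper leaves implicit; but the substance of the two proofs is the same.
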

\begin{proof} 
There are $m-2$ tableaux where the top row is $1$. 
If the bottom row is $m$, we get the following $m-2$ cycle:
\[ 
\ytableausetup{boxsize=2em}{\color{white}{\xrightarrow{\partial^{m-k}}\quad }}
\begin{ytableau}
\none & \none & \none & \none & 1\\
2 &  3 &\cdots & m\minus 2 &  m\minus 1 \\
m 
\end{ytableau}\quad
\xrightarrow{\partial}\quad
\begin{ytableau}
\none & \none & \none & \none & m\minus 2\\
1 & 2 & \cdots & m\minus 3 &  m\\
m\minus 1
\end{ytableau}\]

\[
\xrightarrow{\partial^{m-4}}\quad \begin{ytableau}
\none & \none & \none & \none & 2\\
1 & 4 & 5 &\cdots &  m\\
3
\end{ytableau} \quad
\xrightarrow{\partial}\quad \begin{ytableau}
\none & \none & \none & \none & 1\\
2 & 3 & 4 &\cdots &  m \minus 1\\
m
\end{ytableau}
\]

For the other $m-3$ cases where bottom row contains $k<m$
we get the following $m-1$ cycles (we denote $m-k$ by $a$ for visual clarity): 
\[ \ytableausetup{boxsize=2em}{\color{white}{\xrightarrow{\partial^{m-k}}\quad }}
\begin{ytableau}
	\none & \none  & \none & \none & \none & \none & 1\\
2 &  3 &  \cdots & k\minus 1 &k\+1& \cdots &  m \\
k	 
\end{ytableau}\quad
\xrightarrow{\partial}\quad
\begin{ytableau}
\none & \none  & \none & \none & \none & \none & m\minus 1\\
1 &  2 &  \cdots & k\minus2 &k& \cdots &  m \\
k\minus 1
\end{ytableau}\]

\[
\xrightarrow{\partial^{k-3}}\quad \begin{ytableau}
\none & \none  & \none & \none & \none & \none & a\+2\\
1 &  3 & \cdots & a\+1 &a\+3& \cdots &  m \\
2
\end{ytableau} \quad
\xrightarrow{\partial}\quad \begin{ytableau}
\none & \none  & \none & \none & \none & \none & a\+1\\
1 &  2 & \cdots & a&a\+2& \cdots &  m \minus 1 \\
m
\end{ytableau}
\]

\[
\xrightarrow{\partial^{m-k}}\quad \begin{ytableau}
\none & \none  & \none & \none & \none & \none & 1\\
2 &  3 &  \cdots & k\minus 1 &k\+1& \cdots &  m \\
k
\end{ytableau}.
\]
\end{proof}

\section{Discussion}

We have covered promotion on a few new classes of shapes,
and in particular, paid some well-deserved attention to skew shapes.
In many cases, the Kostka--Foulkes polynomials play a central role,
except in the last section where we needed something different.

It is natural to examine other families of ribbon shapes.
However, the order of promotion on the set $\SYT_R(k,k,k)$
for $k=1,\dotsc,4$ is $1$, $60$, $814773960$, and $82008289440$, respectively,
which is discouraging.
Similarly, the order of promotion on the set $\SYT_R(1,1,k,1)$
for $k\geq 1$ seem (we have verified this for $k\leq 15$)
to be given by the generating function
\[
 x\frac{1+16x-19x^2 +10x^3 -2 x^4}{(1-x)^4}=
 x+ 20x^2 + 55x^3 +114x^4 + 203x^5 + \dotsb.
\]

The main interesting open case is the staircase.
Let $\mdefin{sc_m} \coloneqq (m,m-1,\dotsc,2,1)$.
It has been shown (see \cite{Haiman1992}) 
that $\partial$ acting on $\SYT(sc_m)$
has order $m(m-1)$.

The staircase together with promotion was 
first considered in the context of CSP in \cite{PonWang2011},
where the authors give an injection from 
$\SYT(sc_m)$ to $\SYT(m^{m+1})$, which commutes with promotion.
However, there is still no natural polynomial 
which gives an instance of the cyclic sieving phenomenon.
\begin{problem}
 Find some $p_m(q) \in \setN[q]$ such that
 \[
  \left( \SYT(sc_m), \langle \partial \rangle,  p_m(q) \right)
 \]
is a CSP-triple.
\end{problem}
Promotion on the \emph{shifted staircase} admits a CSP;
this is an example of a minuscule poset which 
behave nice with respect to promotion and cyclic sieving,
see \cite{RushShi2012}. For more background and 
discussion, see \cite{Hopkins2020}.

In \cite[Conj. 5.2]{Hopkins2019}, a related problem
regarding cyclic sieving on triangular
\emph{plane partitions} is stated, with an explicit polynomial.

\medskip 
Finally, a different approach would be to keep the Kostka--Foulkes polynomials,
and replace promotion with a different group action.
In \cite{AlexanderssonPfannererRubeyUhlin2020x}, it is shown
that the set of standard Young tableaux of shape 
$m\lambda \coloneqq (m\lambda_1,m\lambda_2,\dotsc,m\lambda_\ell)$
admit a cyclic sieving phenomenon with $K_{m\lambda,1^{|m\lambda|}}(q)$
as CSP-polynomial and \emph{some} cyclic group of order $m$.

\medskip 

As a more concrete open problem, the statement of 
\cref{thm:rectanglesCSP} with $h(\avec^\bvec,\gamma)$ is a bit 
unintuitive, as there is a choice of $\delta^{(j)}$:s involved.
\begin{problem}[] Is there some way to define $h(\avec^\bvec,\gamma)$ without 
this seemingly arbitrary choice?
\end{problem}

\bibliographystyle{alphaurl}
\bibliography{bibliography}

\end{document}